\documentclass[11pt,reqno]{amsart}

\usepackage{amsmath,amsthm,amssymb,comment,fullpage}
\usepackage{braket}
\usepackage{mathtools}

\usepackage{caption}
\usepackage{times}
\usepackage[T1]{fontenc}
\usepackage{setspace}
\usepackage{mathrsfs}
\usepackage{latexsym}
\usepackage[dvips]{graphics}
\usepackage{epsfig}
\usepackage{hyperref, amsmath, amsthm, amsfonts, amscd, flafter,epsf}
\usepackage{amsmath,amsfonts,amsthm,amssymb,amscd}
\input amssym.def
\input amssym.tex
\usepackage{color}
\usepackage{hyperref}
\usepackage{url}
\usepackage{graphicx}
\usepackage[export]{adjustbox}
\newcommand{\bburl}[1]{\textcolor{blue}{\url{#1}}}



\newcommand{\burl}[1]{\textcolor{blue}{\url{#1}}}

\numberwithin{equation}{section}

\newtheorem{thm}{Theorem}[section]

\theoremstyle{plain}

\newtheorem{definition}[thm]{Definition}

\newtheorem{lemma}[thm]{Lemma}

\newtheorem{theorem}[thm]{Theorem}

\newtheorem{remark}[thm]{Remark}



\newcommand\be{\begin{equation}}
\newcommand\ee{\end{equation}}
\newcommand\bee{\begin{equation*}}
\newcommand\eee{\end{equation*}}
\newcommand\bea{\begin{eqnarray}}
\newcommand\eea{\end{eqnarray}}
\newcommand\beae{\begin{eqnarray*}}
\newcommand\eeae{\end{eqnarray*}}
\newcommand\bi{\begin{itemize}}
\newcommand\ei{\end{itemize}}
\newcommand\ben{\begin{enumerate}}
\newcommand\een{\end{enumerate}}
\newcommand\bc{\begin{center}}
\newcommand\ec{\end{center}}
\newcommand\ba{\begin{array}}
\newcommand\ea{\end{array}}







\newcommand\frakfamily{\usefont{U}{yfrak}{m}{n}}
\DeclareTextFontCommand{\textfrak}{\frakfamily}





\newcommand{\ncr}[2]{{#1 \choose #2}}


\newcommand{\hr}[1]{\href{#1}{\url{#1}}}



\title{Generalizing the German Tank Problem}

\author{Anthony Lee}
\email{\textcolor{blue}{\href{mailto:anthony\_lee24@milton.edu}{anthony\_lee24@milton.edu}}}
\address{Milton Academy, Milton, MA 02186}

\author[Miller]{Steven J. Miller}
\email{\textcolor{blue}{\href{mailto:sjm1@williams.edu}{sjm1@williams.edu}},  \textcolor{blue}{\href{Steven.Miller.MC.96@aya.yale.edu}{Steven.Miller.MC.96@aya.yale.edu}}}
\address{Department of Mathematics and Statistics, Williams College, Williamstown, MA 01267}


\subjclass[2010]{60B10, 11B39, 11B05  (primary) 65Q30 (secondary)}


\date{\today}

\begin{document}

\maketitle

\begin{abstract}
The German Tank Problem dates back to World War II when the Allies used a statistical approach to estimate the number of enemy tanks produced or on the field from observed serial numbers after battles. Assuming that the tanks are labeled consecutively starting from 1, if we observe $k$ tanks from a total of $N$ tanks with the maximum observed tank being $m$, then the best estimate for $N$ is $m(1 + 1/k) - 1$. We explore many generalizations. First, we looked at the discrete and continuous one dimensional case. We attempted to improve the original formula by using different estimators such as the second largest and $L\textsuperscript{{\rm th}}$ largest tank, and applied motivation from portfolio theory by seeing if a weighted average of different estimators would produce less variance; however, the original formula, using the largest tank proved to be the best; the continuous case was similar. Then, we attempted to generalize the problem into two dimensions, where we pick pairs instead of points. We looked at the discrete and continuous square and circle variants. There were more complications, as we dealt with problems in geometry and number theory, such as dealing with curvature issues in the circle, and the problem that not every number is representable as a sum of two squares. In some cases, we concentrated on the large $N$ limit (with fixed $k$) by deriving approximate formulas. For the discrete and continuous square, we tested various statistics, but found that the largest observed component of our pairs is the best statistic to look at; the scaling factor for both cases is $(2k+1)/2k$. For the circle we used  motivation from the equation of a circle; for the continuous case, we looked at $\sqrt{X^2+Y^2}$ and for the discrete case, we looked at $X^2+Y^2$ and took a square root at the end to estimate for $r$. The discrete case was especially involved because we had to use approximation formulas that gave us the number of lattice points inside the circle. Interestingly, the scaling factors were different for the cases. Lastly, we generalized the problem into $L$ dimensional squares and circles. The discrete and continuous square proved similar to the two dimensional square problem. However, for the $L\textsuperscript{{\rm th}}$ dimensional circle, we had to use formulas for the volume of the $L$-ball, and had to approximate the number of lattice points inside it. The formulas for the discrete circle were particularly interesting, as there was no $L$ dependence in the formula.
\end{abstract}

\bigskip

\subjclass[2010]{62J05, 60C05 (primary), 05A10 (secondary)}

\keywords{German Tank Problem, Uniform Distribution, Discrete setting, Continuous setting}

\tableofcontents


\section{Introduction}\label{sec:introduction}

We study a problem where we are given some information about observations, and we have to estimate the number of objects. The motivation comes from the German Tank problem, a classic problem in probability. During World War II, Germans used powerful tanks to their advantage. To develop appropriate military strategies against the Germans, the Allies had to estimate the number of tanks produced on various battlefields. Initially, spies were used to figure out the number of tanks. However, using statistics to estimate the number of tanks proved to be much more accurate; see \cite{Ka} for a history of the problem. During battles, the Allies realized that the destroyed or captured tanks had serial numbers and that they could reverse engineer and use these to their advantage. Assuming that the serial numbers are consecutive\footnote{It is convenient to have the numbers in order; by looking at the serial number one can often quickly tell when it was made and thus when it may need certain types of repairs. However, as the previous work shows, this opens one up to disclosing more information than one would like, and in many applications now companies use formulas to determine the serial numbers, masking information.} and started from 1, the statisticians came up with a formula using the largest observed tank $m_k$ and the number of tanks observed $k$ to make a estimate for $N$, which we denote $\widehat{N}$: \bea \widehat{N} \ = \ m_k\bigg(1+\frac{1}{k}\bigg) \ - \ 1. \eea This formula proved to be very effective. A comparison between the actual and estimated production rate of tanks from \cite{Al} shows how accurate this formula was: while the intelligence estimated that Germans were producing 1,400 tanks per month, using the formula, the statisticians estimated that Germans were producing 256 tanks per month, and indeed, 255 tanks were made! The chart below compares the estimation from the statistical method and intelligence to the actual number of German Tanks \cite{Ru}.

\begin{figure}[h]
    \centering
    \includegraphics[width=10cm]{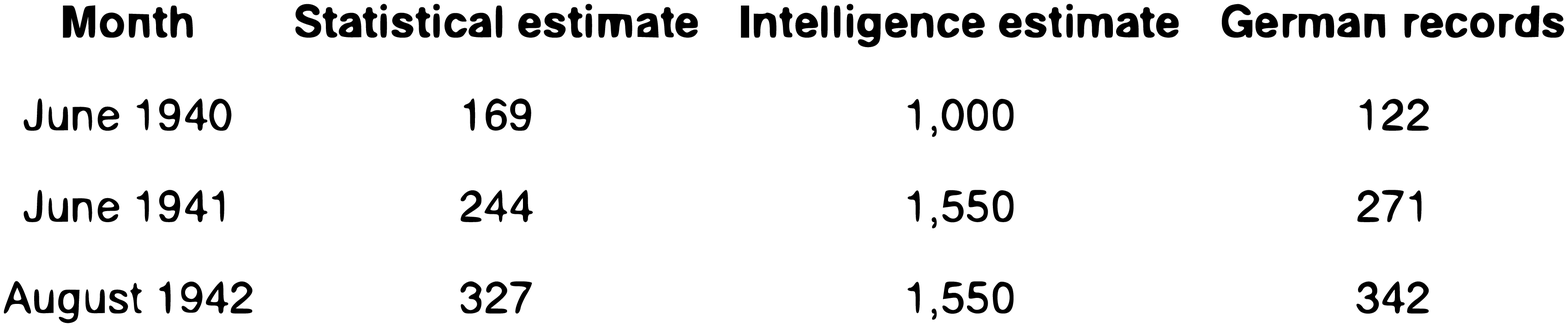}
    \caption{Statistics vs Intelligence.}
    \label{fig:Statistics vs Intelligence}
\end{figure}

We see that the statistical estimate is reasonably similar to the German records. However, the intelligence estimate is very off from the actual number of tanks. Had the Allies used the intelligence estimate, their strategy would have mislead them as they would have overly prepared defense to minimize damage from tanks, which could have left them vulnerable to other attacks. However, the statistical estimate provided them with reasonable estimates.

The German Tank problem is an excellent example of how statistical inference can be applied to real world problems. We attempt to generalize the well known German Tank Problem. Previously, Clark, Gonye, and Miller \cite{CGM} derived a more general formula where the smallest serial number isn't 1, but the tanks are still numbered consecutively. If the spread between the smallest and largest serial number is $s$, then their formula to estimate the number of tanks is
\bea \widehat{N} \ = \ s\bigg(1+\frac{2}{k-1}\bigg)\ - \ 1. \eea

We start by recalling the derivation of the original German tank problem, as we will be extending those calculations. We then see if we can improve the one dimensional formula by looking at different estimators and using motivation from portfolio theory in financial mathematics, specifically looking at weighted sums of estimators to see if we can find a new statistic that has the same mean of its predictions but smaller variance; see Appendix \ref{sec:appendixAportfoliotheory} for a review of portfolio theory and the construction of such statistics.   However, we find that the original German tank formula does better than formulas from these two approaches. We use code to check, and it is attached in Appendix \ref{appendixEmathematicacode}.

Generalizing further, we looked at what would happen if we modified some of the assumptions of the problem. First, we modified the condition that all serial numbers are consecutive integers and we looked at the standard German Tank Problem in the continuous setting. The problem changed slightly as we sample $k$ tanks from the range of $0$ to $N$, but the tanks can be real numbers. To find the answer of the question of finding the best estimator for $N$, we tried various statistics, starting with some obvious choices such as the largest observed tank, the second largest observed tank, and the weighted sum. Interestingly, the continuous case turned out to be very similar to the discrete case, as the best statistic to look at in the continuous case was the largest observed tank, as it produced a formula with the least variance. Furthermore, the scaling factor for the discrete and continuous case were both $(k+1)/k$, which shows the similarity between the two cases. The main conclusion here was that even in the continuous case, the largest observed tank is the best statistic to study.

Moving on from the one dimensional case, we generalized the German Tank problem further into two dimensions. In the two dimensional case, we select pairs instead of points, and the main condition is that we select $k$ pairs without replacement. Specifically, we looked at the square and circle, as we have explicit closed form and asymptotic expressions for the area and number of lattice points inside. The main strategy of deriving formulas was using the CDF method for the discrete and continuous case. (See Lemma \ref{2.7CDFdefinition} for details) Starting from the two dimensional case, we see geometry being involved in the calculations. For the discrete square, we looked at the square with bottom left vertex at $(1,1)$ and upper right vertex at $(N,N)$ and looked at the number of lattice points inside the square. From inspiration from the one dimensional problem, we looked at the largest observed component. Unfortunately, because the closed form expressions that we get are very complex, which makes it difficult to invert, we approximate by focusing on the main term and sometimes second order, as that’s enough to get a pretty good approximation of what’s going on. Thus, for the discrete square, we provide a approximate formula that still does a very good job estimating. The continuous square problem was much easier as calculating integrals was easier than calculating sums. We set the bottom left vertex as $(0,0)$ and the upper right vertex as $(N,N)$. We also looked at the largest observed component for this case too. However, instead of looking at the number of lattice points as we did in the discrete case, we looked at the area, which was much easier. Interestingly, the scaling factor for both the discrete and continuous square was the same, and we state both formulas.
\bea {\rm \textbf{Discrete Square}:} \ \widehat{N} \ = \ \frac{2k+1}{2k}(m-1) \ \ \ \ {\rm \textbf{Continuous Square}:} \ \widehat{N} \ = \ \frac{2k+1}{2k} \cdot m. \eea After looking at the square problem in the discrete and continuous case, we looked at the circle problem. For both the discrete and continuous cases, we set the circle so that it's center was at $(0,0)$ and that it had radius $r$. The statistics we looked at the for the discrete and continuous cases were different. For the continuous case, we looked at the largest observed value of $m_2=\sqrt{X^2+Y^2}$. The motivation comes from the standard form equation of the circle, which is $(x-x_1)^2 + (y-y_1)^2= r^2$ where $(x, y)$ are the arbitrary coordinates on the circumference of the circle, $r$ is the radius of the circle, and $(x_1, y_1)$ are the coordinates of the center of the circle. We used the CDF method with areas to calculate the formula.

Before we look at the discrete case, we see that we need to know the number of lattice points inside a circle with center $(0,0)$ and radius $r$. We visit the classic Gauss Circle problem and use the approximations and denote the error term using Big O notation to denote the number of lattice points inside the circle. To solve the discrete case, we looked at various statistics. First, we looked at the largest observed component, as we were able to obtain nice formulas in the square problem using the largest observed component. However, we faced problem of curvature because we had to split the ranges of the side of the square to see if the square completely fit in a circle or not. Thus, we decided that this isn't the best statistic to look at. We wanted to look at a statistic similar to $\sqrt{X^2+Y^2}$, the formula for the radius of the circle, but we had to make sure that the statistic that we were studying gave only discrete values as outcomes. Therefore, we chose to look at $m_1=X^2+Y^2$ as all values of $m$ are integers. At the end, we have to take the square root of $m$ because now, we are essentially estimating for $m^2$. Another complication that arises in this problem is that $X^2+Y^2\equiv 0,1,2$ (mod 4). We explain more about the number theory complication when we derive the formula. Also, because we used a asymptotic formula for the number of lattice points inside the circle, we weren't able to produce a exact formula, but we produce one that still gives accurate estimations. We see that the discrete and continuous formula is different, and we explain more details in Remark \ref{remark6.2}.
\bea {\rm \textbf{Discrete Circle}:} \ \widehat{r} \ = \  \sqrt{\frac{k+1}{k}(m_1-1)}. \ \ \ \ \ {\rm \textbf{Continuous Circle}:} \ \widehat{r} \ = \ \frac{2k+1}{2k} \cdot m_2. \eea

Finally, we generalized the problem into higher dimensions, $L$ dimensions. We looked at the $L^{\rm th}$ dimensional square and the $L^{\rm th}$ dimensional sphere, also known as the $L$-ball in the continuous and discrete setting. The problem changes slightly as we are selecting $k$ tuples of length $L$ without replacement. For the discrete and continuous $L^{\rm th}$ dimensional square cases, we looked at the largest observed component value from the tuples and derived the formulas. For the continuous $L^{\rm th}$ dimensional square, we obtained a exact formula, but for the discrete $L^{\rm th}$ dimensional square, we approximated the result using the main term. We see that the scaling factors for the discrete and continuous setting are the the same. \bea {\rm \textbf{Discrete L-dim Square}:} \  \widehat{N} \ = \ \frac{Lk+1}{Lk}(m-1). \ {\rm \textbf{Continuous L-dim Square}:} \ \widehat{N} \ = \ \frac{Lk+1}{Lk} \cdot m. \eea The difficulty is calculating the number of lattice points inside the $L$-ball. We use the known formula about the volume of the $L$-ball, and use this formula to estimate how many lattice points are contained inside using Big-O notation. Now, we look at various statistics. For our statistic in the discrete case, we looked at $m_1=X_1^2+X_2^2+\dots+X_L^2$, as this statistic guarantees that all $m$ values are discrete. Then after estimating, we took the square root of $m$ to find the estimate for $r$. For the continuous case, we looked at $m_2=\sqrt{X_1^2+X_2^2+\cdots+X_L^2}$ as we are allowed to get real numbers for $m$ values. The continuous $L$-ball problem was easier, as we could plug in the equation for the $L$-ball to get the volume instead of having to estimate the number of lattice points. We state the formulas we derived: \bea {\rm \textbf{Discrete L-ball}:} \  \widehat{r} \ = \ \sqrt{\frac{k+1}{k} \cdot (m_1-1)}. \ {\rm \textbf{Continuous L-ball}:} \ \widehat{r} \ = \ \frac{Lk+1}{Lk}\cdot m_2. \eea

\section{Preliminaries}\label{sec:preliminaries}

We quickly review some needed results from probability, combinatorics, and integration.

\subsection{Probability Review}

We list a few standard results from probability; for proofs see for example \cite{Mi, Sh}.

\begin{definition}\label{2.1variancedefinition} The variance for a random variable $X$ is the average of the squared difference from the mean, $\mathbb{E}[X]$: \bea {\rm Var}(X) \ := \ \mathbb{E}[(X-\mathbb{E}[X])^2]. \eea
\end{definition}

\begin{lemma}\label{2.2variancelemma} The variance can be computed by
\bea {\rm Var}(X) \ = \ \mathbb{E}[X^2] \ - \ \mathbb{E}[X]^2. \eea
\end{lemma}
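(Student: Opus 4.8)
The plan is to derive the identity directly from the definition of variance given in Definition \ref{2.1variancedefinition}, using nothing more than the linearity of expectation and the fact that the mean $\E{X}$ is a deterministic constant. The entire argument is a short algebraic expansion, so I would set it up so that every step is an application of one of these two principles.

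First I would expand the square appearing inside the expectation in Definition \ref{2.1variancedefinition}, writing
\be
(X - \E{X})^2 \ = \ X^2 \ - \ 2X\,\E{X} \ + \ \E{X}^2,
\ee
where the crucial point is that $\E{X}$ is a fixed real number and not itself a random quantity. Next I would apply linearity of expectation term by term: the first term contributes $\E{X^2}$; in the middle term the constant factor $\E{X}$ pulls outside the expectation to give $-2\,\E{X}\cdot\E{X}$; and the final term, being the expectation of a constant, equals that constant, namely $\E{X}^2$. Collecting these contributions yields
\be
{\rm Var}(X) \ = \ \E{X^2} \ - \ 2\,\E{X}^2 \ + \ \E{X}^2 \ = \ \E{X^2} \ - \ \E{X}^2,
\ee
which is exactly the claimed formula.

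There is no genuine obstacle in this proof; it is a routine manipulation. The only place where care is needed is in consistently treating $\E{X}$ as a scalar constant rather than as a random variable, so that it commutes past the outer expectation and satisfies $\E{\E{X}} = \E{X}$. Once that distinction is kept straight, the expansion and recollection of terms complete the argument.
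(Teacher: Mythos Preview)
Your proof is correct and follows exactly the same route as the paper: expand the square inside the expectation, apply linearity of expectation while treating $\mathbb{E}[X]$ as a constant, and collect the resulting terms. There is nothing to add.
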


See Lemma \ref{lem:varformula} in Appendix \ref{sec:appendixBprob} for a proof.

\begin{definition}\label{2.3covariancedefinition} For two jointly distributed real valued random variables $X$ and $Y$, the covariance is defined as the expected value of the product of their deviations from their individual expected values: \bea {\rm Cov}(X,Y) \ := \ \mathbb{E}\left[ \left(X-\mathbb{E}[X]\right) \cdot \left(Y-\mathbb{E}[Y]\right)\right]. \eea
\end{definition}

\begin{lemma}\label{2.4covariancelemma} The covariance can be computed by
\bea {\rm Cov}(X,Y)  \ = \ \mathbb{E}[XY] - \mathbb{E}[X]\mathbb{E}[Y]. \eea
\end{lemma}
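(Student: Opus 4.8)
The plan is to prove this directly from the definition of covariance in Definition \ref{2.3covariancedefinition}, using only linearity of expectation; the argument is the exact two-variable analogue of the variance formula in Lemma \ref{2.2variancelemma}, which is recovered as the special case $X = Y$. First I would start from $\mathrm{Cov}(X,Y) = \mathbb{E}[(X-\mathbb{E}[X])(Y-\mathbb{E}[Y])]$ and multiply out the product inside the expectation, obtaining the four-term expression $XY - X\,\mathbb{E}[Y] - \mathbb{E}[X]\,Y + \mathbb{E}[X]\,\mathbb{E}[Y]$.

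Next I would apply linearity of expectation term by term. The key observation is that $\mathbb{E}[X]$ and $\mathbb{E}[Y]$ are deterministic constants, so they factor out of any expectation, and the expectation of a constant is the constant itself. Applying this to each of the four terms turns the expression into $\mathbb{E}[XY] - \mathbb{E}[X]\mathbb{E}[Y] - \mathbb{E}[X]\mathbb{E}[Y] + \mathbb{E}[X]\mathbb{E}[Y]$. Combining the last three terms then collapses them to a single $-\,\mathbb{E}[X]\mathbb{E}[Y]$, leaving $\mathbb{E}[XY] - \mathbb{E}[X]\mathbb{E}[Y]$, which is the claimed identity.

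There is no substantive obstacle in this argument; it is a routine consequence of linearity. The only point that merits attention is the bookkeeping when pulling $\mathbb{E}[X]$ and $\mathbb{E}[Y]$ through the outer expectation, namely remembering that they are fixed numbers rather than random variables, so that $\mathbb{E}[X\,\mathbb{E}[Y]] = \mathbb{E}[Y]\,\mathbb{E}[X]$ and $\mathbb{E}[\mathbb{E}[X]\,\mathbb{E}[Y]] = \mathbb{E}[X]\,\mathbb{E}[Y]$. Implicitly one also assumes that the relevant expectations exist and are finite for the jointly distributed pair $(X,Y)$, so that the linearity manipulations are justified.
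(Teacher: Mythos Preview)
Your proposal is correct and matches the paper's own proof essentially line for line: both start from the definition $\mathbb{E}[(X-\mathbb{E}[X])(Y-\mathbb{E}[Y])]$, expand the product, apply linearity of expectation while treating $\mathbb{E}[X]$ and $\mathbb{E}[Y]$ as constants, and collapse the resulting four terms to $\mathbb{E}[XY]-\mathbb{E}[X]\mathbb{E}[Y]$.
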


See Lemma \ref{lem:covariance} in Appendix \ref{sec:appendixBprob} for a proof.

\begin{theorem}\label{2.5linearityofexpectation}(Linearity of expectation)
Let $X_1, \dots, X_n$ be random variables, and let $g_1, \dots, g_n$ be functions such that $\mathbb{E}[|g_i(X_i)|]$ exists and is finite, and let $a_1, \dots, a_n$ be any real numbers; note the random variables do not have to be independent. Then \bea \mathbb{E}\left[a_1g_1(X_1)+ \dots + a_ng_n(X_n)\right] \ = \ a_1\mathbb{E}[g_1(X_1)] + \cdots + a_n\mathbb{E}[g_n(X_n)]. \eea
\end{theorem}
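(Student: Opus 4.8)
The plan is to reduce the general $n$-term statement to the two-variable case by induction on $n$, and then to prove the two-variable case by separating it into homogeneity and additivity. The base case $n=1$ asserts $\mathbb{E}[a_1 g_1(X_1)] = a_1 \mathbb{E}[g_1(X_1)]$, which is just pulling a constant through the defining sum or integral of the expectation. For the inductive step, having established the result for $n-1$ terms, I would group $a_1 g_1(X_1) + \cdots + a_{n-1} g_{n-1}(X_{n-1})$ as a single random variable $W$ and write the $n$-term sum as $W + a_n g_n(X_n)$, so that the whole statement follows once additivity $\mathbb{E}[W + Z] = \mathbb{E}[W] + \mathbb{E}[Z]$ is known for a pair of jointly distributed variables.

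First I would dispose of homogeneity: for any constant $a$ and random variable $Z$ with $\mathbb{E}[|Z|] < \infty$, the identity $\mathbb{E}[aZ] = a\,\mathbb{E}[Z]$ is immediate from the definition of expectation as a sum (discrete case) or an integral (continuous case), since the constant $a$ factors out. The substance is additivity. Setting $Z_1 = g_i(X_i)$ and $Z_2 = g_j(X_j)$, I would express $\mathbb{E}[Z_1 + Z_2]$ using the joint distribution of $(X_i, X_j)$: in the discrete case as a double sum $\sum_{x_i}\sum_{x_j}(g_i(x_i) + g_j(x_j))\, p(x_i, x_j)$, and in the continuous case as the corresponding double integral against the joint density. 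Splitting the integrand into its two pieces and marginalizing out the unused variable recovers $\mathbb{E}[Z_1]$ and $\mathbb{E}[Z_2]$ separately, which is exactly the claim. Crucially, no independence is needed, because marginalization of a joint distribution holds regardless of dependence.

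The main obstacle, and the reason the hypothesis $\mathbb{E}[|g_i(X_i)|] < \infty$ is imposed, is justifying the interchange of the order of summation or integration and the splitting of the integrand into two separately integrable pieces. I would invoke Fubini--Tonelli: the finiteness of each $\mathbb{E}[|g_i(X_i)|]$ guarantees that $g_i(x_i) + g_j(x_j)$ is absolutely integrable with respect to the joint distribution, so the double sum or integral may be evaluated iteratively and decomposed term by term without altering its value. Once two-variable additivity is secured under this integrability condition, the induction closes and the full linear combination formula follows.
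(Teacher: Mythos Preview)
Your proposal is a correct and standard proof of linearity of expectation: reduce to the two-term case by induction, handle homogeneity trivially, and establish additivity by integrating against the joint distribution and marginalizing, with Fubini--Tonelli justifying the split under the integrability hypothesis. There is nothing to fault in the argument.

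However, the paper does not actually prove this theorem. Theorem~\ref{2.5linearityofexpectation} is stated in the preliminaries section among ``a few standard results from probability'' with the remark ``for proofs see for example \cite{Mi, Sh},'' and no proof appears anywhere in the paper or its appendices. So there is no in-paper argument to compare against; your write-up simply supplies what the paper outsourced to textbooks, and does so along the conventional lines one would find in those references.
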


\begin{theorem}\label{2.6jointprobabilitydensityfunction}(Joint Probability Density Function)
Let $X_1, X_2, \dots,X_n$ be continuous random variables with densities $f_{X_1}, f_{X_2}, \dots, f_{X_n}$ defined on $\mathbb{R}$. The joint density function of the tuple $(X_1,\dots,X_n)$ is a non-negative integrable function $f_{X_1,X_2,\dots,X_n}$ such that for every nice set $S \subset \mathbb{R}^n$ we have \bea {\rm Prob}((X_1,\dots,X_n) \in S) \ = \ \idotsint_S f_{X_1,X_2,\dots,X_n}(x_1,\dots,x_n) \,dx_1 \cdots dx_n, \eea and \bea f_{X_i}(x_i) \ = \ \int_{x_1, \dots, x_{i-1}, x_{i+1}, \dots, x_n=-\infty}^\infty f_{X_1,X_2,\dots,X_n}(x_1,\dots,x_n) dx_1 \cdots dx_{i-1} dx_{i+1} \cdots dx_n. \eea For discrete random variables, we can just replace integrals with sums. \end{theorem}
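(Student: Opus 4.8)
The plan is to take the existence of the joint density $f_{X_1,\dots,X_n}$ and the probability formula over nice sets $S$ as the defining hypothesis, and to verify only the substantive claim of the statement, namely the marginalization formula expressing $f_{X_i}$ as the integral of the joint density over the remaining $n-1$ coordinates. The strategy is to route through the marginal cumulative distribution function of $X_i$, apply the defining probability relation to a suitable slab, separate the iterated integral by Fubini, and then recover the marginal density by the fundamental theorem of calculus.

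First I would fix an index $i$ and a real number $a$, and write the marginal CDF as $F_{X_i}(a) = \mathrm{Prob}(X_i \le a)$. The key observation is that the event $\{X_i \le a\}$ coincides with the event $\{(X_1,\dots,X_n) \in S_a\}$, where $S_a = \mathbb{R}^{i-1} \times (-\infty, a] \times \mathbb{R}^{n-i}$ is the slab constraining only the $i$-th coordinate. Since $S_a$ is a nice set, the defining probability relation applies and expresses $F_{X_i}(a)$ as the integral of $f_{X_1,\dots,X_n}$ over $S_a$.

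Next I would invoke Fubini's theorem to peel the $x_i$-integral off from the others, writing $F_{X_i}(a) = \int_{-\infty}^{a} g(x_i)\, dx_i$, where $g(x_i)$ denotes the inner $(n-1)$-fold integral of the joint density over every coordinate except the $i$-th. Comparing this with the relation $F_{X_i}(a) = \int_{-\infty}^a f_{X_i}(x_i)\, dx_i$, which holds because $f_{X_i}$ is the density of $X_i$, and then differentiating in $a$ via the fundamental theorem of calculus yields $f_{X_i} = g$, which is exactly the claimed formula. The discrete case follows identically once each integral is replaced by the corresponding sum, with the final differentiation step becoming a first difference of the CDF rather than a derivative.

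The hard part is purely the analytic bookkeeping rather than any conceptual difficulty. One must confirm that Fubini's theorem legitimately separates the iterated integral, which is exactly where the non-negativity and integrability of $f_{X_1,\dots,X_n}$ are used, and one must check that the inner integral $g$ is, at almost every $x_i$, genuinely the derivative of its own antiderivative, so that the fundamental theorem recovers $g$ itself rather than merely an almost-everywhere representative. For the applications in this paper the relevant densities are continuous on their support, so these hypotheses hold automatically and the differentiation step is unproblematic.
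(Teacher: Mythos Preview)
Your argument is correct and is the standard textbook derivation of the marginalization formula: restrict to the slab $S_a$, apply the defining integral, separate variables via Fubini (justified by nonnegativity and integrability), and differentiate the resulting CDF identity. There is nothing to compare against in the paper itself, however: this theorem is listed in the preliminaries as a standard result from probability, with proofs deferred to the references \cite{Mi, Sh}, and no argument is given in the text. Your write-up would therefore supply a proof where the paper has none, which is fine but goes beyond what the paper does.
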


\begin{definition}\label{2.7CDFdefinition}
The cumulative distribution function (CDF) of a random variable $X$ with density $f$, denoted $F$, is given by \bea F(x) \ := \ {\rm Prob}(X \le x) \ = \ \int_{-\infty}^{x} f(t) \,dt, \ {\rm for \ any} \ x \in \mathbb{R}. \eea
\end{definition}

In many situations it is easy to compute the CDF, and thus by using the Fundamental Theorem of Calculus we can determine the probability density function. In the continuous case it is the derivative of the CDF, in the discrete case when the values of our random variable are non-negative integers, then the probability of $m$ is $F(m) - F(m-1)$.

\subsection{Analysis Review}

Unfortunately many of the sums we encounter are very complex and difficult to invert when expressed in a closed form, and thus we estimate using the main term and the second order. The following frequently provides a good, easily computed bound; see for example \cite{YK}.

\begin{theorem}\label{2.8EulerMaclaurin}(Euler-Maclaurin formula) For $p$ a positive integer and a function $f(x)$ that is $p$ times continuously differentiable on the interval $[a,b]$, we have \bea \sum_{i= a}^{b} f(i) & \ = \ & \int_{a}^{b} f(x) \,dx + \frac{f(a)+f(b)}{2} \ + \ \sum_{q=1}^{\lfloor \frac{p}{2} \rfloor}\frac{B_{2q}}{(2q)!}(f^{2q-1}(b)-f^{2q-1}(a))+R_p, \eea and \bea |R_p| \ \le \ \frac{2\zeta(p)}{(2\pi)^p} \int_{m}^{n}|f^{(p)}(x)| \,dx. \eea
\end{theorem}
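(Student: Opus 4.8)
The plan is to prove the identity one integer step at a time, sum (telescope) over the unit subintervals of $[a,b]$, peel off the higher-order terms by repeated integration by parts against the periodic Bernoulli functions, and finally bound the remaining error through their Fourier expansion.

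First I would fix notation: let $B_n(x)$ be the Bernoulli polynomials, characterized by $B_0(x)=1$, $B_n'(x)=nB_{n-1}(x)$, and $\int_0^1 B_n(x)\,dx=0$ for $n\ge1$, with $B_n=B_n(0)=B_n(1)$ for $n\ge2$ the Bernoulli numbers, and let $\overline{B}_n(x)=B_n(\{x\})$ denote their $1$-periodic extensions. The building block is a single integration by parts on $[0,1]$ using $B_1(x)=x-\tfrac12$, namely
\bea \int_0^1 f(x)\,dx \ = \ \frac{f(0)+f(1)}{2} - \int_0^1 B_1(x)f'(x)\,dx. \eea
Applying this on each $[i,i+1]$ for $i=a,\dots,b-1$ and adding, the interior half-weights combine so that the trapezoidal sums collapse to $\sum_{i=a}^{b}f(i)-\tfrac{f(a)+f(b)}{2}$, and the individual remainder integrals glue into one integral against $\overline{B}_1$. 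This gives the first-order Euler--Maclaurin identity
\bea \sum_{i=a}^{b} f(i) \ = \ \int_a^b f(x)\,dx + \frac{f(a)+f(b)}{2} + \int_a^b \overline{B}_1(x)f'(x)\,dx. \eea

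Next I would repeatedly integrate by parts on the global remainder integral, using $\frac{d}{dx}\frac{\overline{B}_{j+1}(x)}{(j+1)!}=\frac{\overline{B}_j(x)}{j!}$ away from the integers together with the endpoint values $\overline{B}_j(a)=\overline{B}_j(b)=B_j$ (valid for $j\ge2$, where the periodic extension is continuous). Each step produces a boundary term $\frac{B_j}{j!}\big(f^{(j-1)}(b)-f^{(j-1)}(a)\big)$ plus a higher-order integral; since $B_j=0$ for odd $j\ge3$, only the even-index terms survive, producing exactly $\sum_{q=1}^{\lfloor p/2\rfloor}\frac{B_{2q}}{(2q)!}\big(f^{(2q-1)}(b)-f^{(2q-1)}(a)\big)$, with the leftover integral defining $R_p$.

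The hard part will be the remainder bound. After the integrations by parts, $R_p$ takes the form $\frac{(-1)^{p+1}}{p!}\int_a^b \overline{B}_p(x)f^{(p)}(x)\,dx$. I would bound $|\overline{B}_p|$ uniformly using its Fourier series $\overline{B}_p(x)=-\frac{p!}{(2\pi i)^p}\sum_{k\ne0}k^{-p}e^{2\pi i kx}$; taking absolute values and summing $\sum_{k\ge1}k^{-p}=\zeta(p)$ yields $|\overline{B}_p(x)|\le\frac{2\,p!\,\zeta(p)}{(2\pi)^p}$. Pulling this constant out of the integral then gives $|R_p|\le\frac{2\zeta(p)}{(2\pi)^p}\int_a^b|f^{(p)}(x)|\,dx$, matching the stated inequality. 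The only delicate point is the bookkeeping of signs and factorial normalizations through the repeated integration by parts, so that the Fourier bound attaches with precisely the constant $\frac{2\zeta(p)}{(2\pi)^p}$; the genuine analytic content reduces to the convergence of the $\zeta(p)$ series.
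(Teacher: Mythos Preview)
Your argument is the standard and correct route to Euler--Maclaurin: start from the trapezoid rule on each unit interval via $B_1(x)=x-\tfrac12$, telescope, then iterate integration by parts against $\overline{B}_j/j!$ and use $B_j=0$ for odd $j\ge3$ to leave only the even-index boundary terms; finally bound $|\overline{B}_p|$ by $\dfrac{2\,p!\,\zeta(p)}{(2\pi)^p}$ from its Fourier expansion. This is exactly how the result is proved in the reference the paper cites.

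The paper itself, however, does not give a proof of this theorem at all: it states the formula and refers the reader to \cite{YK}. So there is no in-paper argument to compare yours against; you have supplied the missing proof, and your version matches the cited source's approach. One small caveat worth noting is that the Fourier bound you invoke requires $p\ge2$ for absolute convergence (indeed $\zeta(1)$ diverges, so the stated remainder estimate is vacuous at $p=1$); this is a defect of the formulation in the paper rather than of your proof, and the applications later in the paper only use $p=2$ anyway.
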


The estimation below is needed in computing upper and lower bounds in applications of the Euler-Maclaurin formula later.

\begin{lemma}\label{2.9boundslemma} For $m,L \geq 0$ and $k \geq 1$,
\bea \bigg(m^{Lk} - m^{Lk-L}\bigg(\frac{k(k-1)}{2}\bigg)\bigg)\ \le \  m^L(m^L-1)\cdots(m^L-(k-1) \le m^{Lk}. \eea
\end{lemma}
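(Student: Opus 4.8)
The plan is to first strip off the $m^L$ by setting $a := m^L$, which is legitimate since $m^{Lk} = (m^L)^k = a^k$ and $m^{Lk-L} = m^{L(k-1)} = (m^L)^{k-1} = a^{k-1}$. The statement then reduces to a two-sided estimate on the falling factorial $(a)_k := a(a-1)\cdots(a-(k-1))$, namely $a^k - \frac{k(k-1)}{2}a^{k-1} \le (a)_k \le a^k$. I would carry this out for $a \ge k-1$, which is the regime relevant to the later applications (where $m$ is large); the degenerate cases, e.g.\ $m=0$, give $0 \le 0 \le 0$ directly and can be dispatched in a line.

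For the upper bound I would simply observe that each of the $k$ factors satisfies $a - j \le a$ and is nonnegative once $a \ge k-1$, so their product is at most $a^k$. This step is immediate and requires no further work.

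The lower bound is the substantive part. Rather than expand $(a)_k = a^k - \frac{k(k-1)}{2}a^{k-1} + \cdots$ and try to control the remaining elementary-symmetric terms directly, I would use a telescoping identity. Writing $P_i := \prod_{j=0}^{i-1}(a-j)$, so that $P_0 = 1$ and $P_k = (a)_k$, and setting $T_i := a^{k-i}P_i$ (so $T_0 = a^k$ and $T_k = (a)_k$), the recursion $P_i = (a-(i-1))P_{i-1}$ yields $T_{i-1} - T_i = (i-1)\,a^{k-i}P_{i-1}$. Summing from $i=1$ to $k$ telescopes to $a^k - (a)_k = \sum_{i=1}^{k}(i-1)\,a^{k-i}P_{i-1}$. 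Bounding each partial product by $P_{i-1} \le a^{i-1}$ collapses the summand to $(i-1)a^{k-1}$, and since $\sum_{i=1}^{k}(i-1) = \frac{k(k-1)}{2}$ we get $a^k - (a)_k \le \frac{k(k-1)}{2}a^{k-1}$, exactly the desired bound. Equivalently, one could factor out $a^k$ and invoke the Weierstrass product inequality $\prod_{j=1}^{k-1}\bigl(1-\tfrac{j}{a}\bigr) \ge 1 - \sum_{j=1}^{k-1}\tfrac{j}{a}$.

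The main obstacle is precisely the sign issue in the lower bound: the truncation $a^k - \frac{k(k-1)}{2}a^{k-1}$ retains only the first correction term of $(a)_k$, and the discarded lower-order terms do not all carry a favorable sign, so a crude term-by-term estimate of the symmetric functions is unavailable. The telescoping (or Weierstrass) device is what sidesteps this, organizing the computation so that only the uniformly bounded positive partial-product contributions appear. A secondary point to keep clean is the hypothesis $a = m^L \ge k-1$, which guarantees both that every factor $a-j$ is nonnegative (for the upper bound) and that $P_{i-1} \le a^{i-1}$ (for the lower bound); I would state this explicitly or absorb it into the large-$m$ convention used throughout.
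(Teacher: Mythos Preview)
Your proof is correct and takes a genuinely different route from the paper. The paper argues by induction on the number of factors: assuming the lower bound holds for the product of $k$ factors, it multiplies both sides by the next factor $(m^L-k)$ and expands, observing that the surplus term $\frac{k^2(k-1)}{2}m^{Lk-L}$ is nonnegative and may be discarded to recover the $(k+1)$-factor inequality. Your telescoping identity $a^k-(a)_k=\sum_{i=1}^k (i-1)\,a^{k-i}P_{i-1}$ combined with the bound $P_{i-1}\le a^{i-1}$ reaches the same conclusion in one pass, and your alternative phrasing via the Weierstrass product inequality $\prod(1-x_j)\ge 1-\sum x_j$ names the general principle at work. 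Both arguments require $m^L\ge k-1$ so that all factors stay nonnegative; you flag this explicitly, whereas the paper leaves it tacit (in every application $m$ ranges over values at least $\lceil k^{1/L}\rceil$). Your approach is a bit more conceptual and sidesteps the bookkeeping of the inductive expansion, while the paper's induction is the more bare-hands route.
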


\begin{proof}
The upper bound follow trivially as  \bea m^L(m^L-1)\cdots(m^L-(k-1)) \ \le \ (m^L)(m^L)\cdots (m^L). \eea To justify the lower bound, we want to prove the inequality \bea m^L(m^L-1) \cdots (m^L-(r-1)) \ \geq \ m^{Lr}-\frac{r(r-1)}{2}m^{Lr-L}. \eea The base case is satisfied when $r=1$ as $m^2 \ \geq \ m^2$. We assume that the case when $r=k$ is true: \bea m^L(m^L-1) \cdots (m^L-(k-1)) \ \geq \ m^{Lk}-\frac{k(k-1)}{2}m^{Lk-L}. \eea We must show it holds when $r=k+1$; namely we must show \bea m^L(m^L-1) \cdots (m^L-k) \ \geq \ m^{Lk+L} \ - \ \frac{(k+1)k}{2}m^{Lk}. \eea This follows immediately by substitution and expansion: \bea m^L(m^L-1) \cdots (m^L-(k-1))(m^L-k) & \ \geq \ & \bigg(m^{Lk}-\frac{k(k-1)}{2}m^{Lk-L}\bigg)(m^L-k) \nonumber \\ & \geq & m^{Lk+L} \ - \ \frac{k(k+1)}{2} m^{Lk} \ + \ \frac{k^2(k-2)}{2}m^{Lk-L} \nonumber \\ & \geq & m^{Lk+L} \ - \ \frac{k(k+1)}{2}m^{Lk}. \eea
\end{proof}

Finally, in many of our generalizations it is impossible to obtain simple closed form expressions such as those in the original problem. We thus investigate the large $N$ limit, and big-Oh notation is useful in isolating the main term from lower order terms which have negligible effect.

\begin{definition}\label{2.10bigOnotation}(Big-Oh Notation)
Suppose $f(x)$ and $g(x)$ are two functions defined on the real numbers. We write $f(x) =  O(g(x))$ (read ``$f$ is Big-Oh of $g$'') if there exists a positive constant $C$ such that $|f(x)| \le C g(x)$ is satisfied for all sufficiently large $x$.
\end{definition}

\subsection{Combinatorial Results}

We list four useful identities that are needed in later calculations. These are generalizations of the famous hockey stick identity from Pascal's triangle; that identity is responsible for the closed form expression in the original problem, and these identities play a similar role in our generalizations. The proofs for these identities can be found in the Appendix \ref{sec:appendixCproofsofidentities}.

\ \\

\noindent \textbf{Identity I}: For all $N \geq k$, \bea \sum_{m=k}^{N} \binom{m-b}{k-c} & \ = \ & \binom{N-b+1}{k-c+1} \ - \ \binom{k-b}{k-c+1}. \eea

\ \\

\noindent \textbf{Identity II}: For all $N \geq k$, \bea \sum_{m=k-a+1}^{N-a+1} m \frac{\binom{m-1}{k-a}\binom{N-m}{a-1}}{\binom{N}{k}} & \ = \ & \frac{(N+1)(k-a+1)}{k+1}.\eea

\ \\

\noindent \textbf{Identity III}: For all $N \geq k$, \bea \sum_{m=k-a+1}^{N-a+1} m^2 \frac{\binom{m-1}{k-a}\binom{N-m}{a-1}}{\binom{N}{k}} & \ = \ & \frac{(k-a+1)(k-a+2)(N+2)(N+1)}{(k+2)(k+1)} \nonumber \\ && \ \ \ \ - \ \frac{(N+1)(k-a+1)}{k+1}. \eea

\ \\
\noindent \textbf{Identity IV}: For all $k \geq 0$, \bea \sum_{i=0}^{k} \binom{a+i}{a} \binom{b+k-i}{b} \ = \ \binom{a+b+k+1}{a+b+1}. \eea

\section{Derivation of Original German Tank Problem}

We derive below the formula for the original German tank problem where $m$ represents the largest tank observed, $k$ the number of tanks observed, and $N$ the number of tanks (which are numbered consecutively from 1 to $N$). We use $\widehat{N}$ to represent our estimate for the total number of tanks, $N$. It is useful to run through this argument before we generalize the inference problem, and we follow the exposition in \cite{CGM}. We prove  \bea \widehat{N} \ = \ m_k\bigg(1+\frac{1}{k}\bigg) \ - \ 1. \eea

We quickly check some extreme cases to see if this formula is reasonable. When we observe just one tank, the estimation formula is $2m - 1$. We expect the observation to be $N/2$, so multiplying by 2 is reasonable. When we observe all $N$ tanks (so $k = N$), the estimation is just $m$, which is also reasonable because we know the number all the tanks. We use Pascal's identity and the hockey stick identity frequently in calculations; see Appendix \ref{sec:appendixBprob} for proofs.

\subsubsection{The PDF of the Largest Observed Tank}

Let $M_k$ be the random variable for the largest tank observed, and let $m_k$ be its observed value. Thus $m_k$ is the largest tank serial number that we observe. Our goal is to find a formula to estimate $N$ from $m_k$ and $k$. We first compute the PDF for $M_k$.

\begin{lemma}
For $k \le m_k \le n$, \bea {\rm Prob}(M=m_k) \ = \ \frac{\binom{m_k-1}{k-1}}{\binom{N}{k}}. \eea
\end{lemma}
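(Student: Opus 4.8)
The plan is to compute the probability that the largest observed tank equals $m_k$ by direct combinatorial counting. We sample $k$ tanks without replacement from $\{1, 2, \dots, N\}$, and we want $M = m_k$. The total number of ways to choose $k$ tanks from $N$ is $\binom{N}{k}$, and by symmetry each such choice is equally likely; this will be the denominator.

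**Counting the favorable outcomes.**

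For the event $M = m_k$ to occur, two conditions must hold simultaneously: the tank labeled $m_k$ must be among the $k$ chosen, and all other $k-1$ chosen tanks must have serial numbers strictly less than $m_k$, i.e. they lie in $\{1, 2, \dots, m_k - 1\}$. (No chosen tank can exceed $m_k$, since $m_k$ is the maximum.) Thus we must select exactly $k-1$ tanks from the $m_k - 1$ tanks below $m_k$, which can be done in $\binom{m_k - 1}{k-1}$ ways, and the tank $m_k$ itself is forced to be included. Dividing the favorable count by the total gives
\bea
{\rm Prob}(M = m_k) \ = \ \frac{\binom{m_k-1}{k-1}}{\binom{N}{k}},
\eea
which is the claimed formula. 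The constraint $k \le m_k \le N$ in the statement is exactly the range where this is nonzero: we need at least $k-1$ tanks below $m_k$ (forcing $m_k \ge k$), and $m_k$ cannot exceed $N$.

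**Expected obstacle.**

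I do not anticipate a genuine obstacle here, as this is a clean application of equally-likely outcomes. The only point requiring a moment of care is the justification that each $k$-subset is equally likely and that ``largest observed equals $m_k$'' translates precisely into ``$m_k$ is chosen and the remaining $k-1$ come from below.'' One should also verify the edge of the range: when $m_k = k$ the binomial $\binom{m_k-1}{k-1} = \binom{k-1}{k-1} = 1$, correctly reflecting that the only way the maximum is $k$ is to have sampled exactly $\{1, \dots, k\}$. As a sanity check, summing over $m_k$ from $k$ to $N$ should yield $1$; this follows immediately from the hockey stick identity, $\sum_{m_k=k}^{N} \binom{m_k-1}{k-1} = \binom{N}{k}$, which also previews the role these identities play throughout the paper.
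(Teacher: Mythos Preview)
Your proof is correct and follows essentially the same approach as the paper: count the $\binom{N}{k}$ equally likely $k$-subsets, observe that those with maximum $m_k$ are obtained by fixing $m_k$ and choosing the remaining $k-1$ from $\{1,\dots,m_k-1\}$, and take the ratio. Your added edge-case and hockey-stick sanity checks are extra but harmless.
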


\begin{proof}
We know that the total number of ways to select $k$ tanks from $N$ possible tanks is $\binom{N}{k}$. If the largest tank we observe is $m$, then we have to choose $k-1$ tanks from $m_k-1$ possibilities. Therefore, the probability that the largest tank we observe is $m$ is $\binom{m_k-1}{k-1}/ \binom{N}{k}$, thus proving the claim.
\end{proof}

\begin{remark} It is worth remarking that the process of deriving the formula is different from the application. In the application, we use tanks we observe, $m_1,m_2, \dots, m_k$ and estimate for $N$ by applying the formula. However, when we are deriving the formula, we use $N$ and $k$ to compute the expected value of $M_k$, and then we invert the equation to find the formula for $N$.
\end{remark}

\subsubsection{Derivation}

Now we calculate the expected value of $M_K$ in order to find a equation with $N$ and $k$. To calculate the expected value, we multiply each value of the random variable by its probability and add the products. We expand the binomials, and regroup so that we can use the hockey stick identity. We find \bea \mathbb{E}[M_k] & \ = \ & \sum_{m_k=k}^{N} m_k {\rm Prob}(M_k=m_k) \nonumber\\ &=& \sum_{m_k=k}^{N} m_k \frac{\binom{m_k-1}{k-1}}{\binom{N}{k}} \nonumber\\
&=& \frac{1}{\binom{N}{k}} \sum_{m_k=k}^{N} \frac{m_k!}{(k-1)!(m_k-k)!} \frac{k}{k} \nonumber\\
&=& \frac{k}{\binom{N}{k}} \sum_{m_k=k}^{N} \frac{m_k!}{(k)!(m_k-k)!} \nonumber\\
&=& \frac{k}{\binom{N}{k}} \sum_{m_k=k}^{N} \binom{m_k}{k} \nonumber\\
&& {\rm We \ use \ the \ hockey \ stick \ identity, \ recall \ Lemma \ \ref{lem:fourhockeystickidentity}, \ to \ evaluate \ the \ sum} \nonumber\\
&=& \frac{k}{\binom{N}{k}} \binom{N+1}{k+1} \nonumber\\
&=& k \Bigg(\frac{N+1}{k+1}\Bigg). \eea
Using our formula for $M_k$ as a function of $N$ and $k$, we can invert and obtain the formula for $N$ in terms of $M_k$ and $k$:
\bea \widehat{N} \ = \ \mathbb{E}[M_k] \Bigg(\frac{k+1}{k}\Bigg) \ - \ 1. \eea
To obtain our estimate $\widehat{N}$ we substitute the observed $m_k$ for $\mathbb{E}[M_k]$:
\bea \widehat{N} \ = \ m_k \Bigg(\frac{k+1}{k}\Bigg) \ - \ 1. \eea
To see how well this formula does, we paste results of code. We see the mean and variance when we plug in values of $N$ and $k$.
\begin{figure}[h]
    \centering
    \includegraphics[width=16cm, frame]{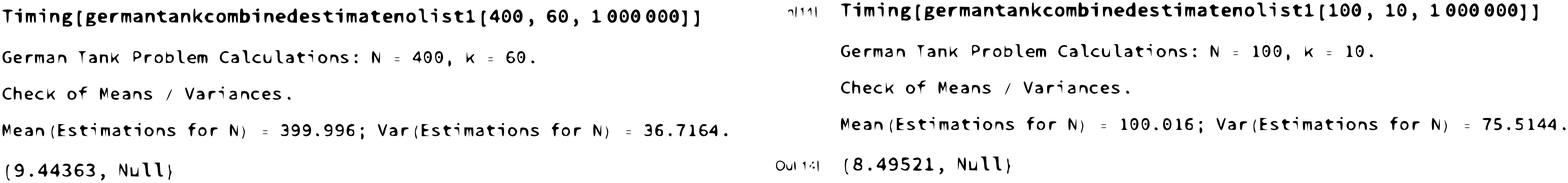}
    \caption{Results for formula using largest tank.}
    \label{fig:One dimensional Problem code}
\end{figure}
\section{Estimating with more tanks}

There are two natural approaches to try to improve the our prediction for the original problem. One is to find another estimator; perhaps the mean or median might do better. We pursue this and investigate statistics derived from the $L\textsuperscript{{\rm th}}$ largest tank, value does better.

Another approach is to use some motivation from portfolio theory; see for example \cite{H}. Imagine we have two independent stocks with the same expected return. By looking at a linear combination, the combined portfolio will still have the same expected return, but if the weights are chosen properly a smaller variance; we provide details in Appendix \ref{sec:appendixAportfoliotheory}. The idea of weighing two random variables to create a combined one with less variance is a common method, and we apply that to our problem to see if we can improve the quality of the estimator for the German tank problem by looking at a combination of two tanks. The computation is a bit more involved than the two stock example, as the values of the two tanks are not independent; if we know the largest tank value is $m_k$ then clearly $m_{k-1} < m_k$. We provide complete details of the analysis of exploring statistics involving the two largest observed serial numbers, and find that the quality of the estimator is \emph{not} improved by incorporating both values.

\subsection{Estimation from Various Tanks}

Let $M_{k-1}$ be the random variable for the value of the second largest tank observed and let $m_{k-1}$ be the value we observe. We find the probability that the second largest tank is $m_{k-1}.$ We claim that for $k-1 \le m_{k-1} \le n-1$, the probability that $M_{k-1} =  m_{k-1}$ is \bea {\rm Prob}(M_{k-1}=m_{k-1}) \ = \ \frac{\binom{m_{k-1}-1}{k-2}\binom{N-m_{k-1}}{1}}{\binom{N}{k}}. \eea Clearly the probability is zero for tanks outside this range. There are $\binom{N}{k}$ ways to choose $k$ tanks from $N$. We need $k-2$ tanks to be smaller than $m_{k-1}-1$; there are $\binom{m_{k-1}-1}{k-2}$ ways for that to happen. We then have to choose tank $m_k$, which has to be larger than tank $m_{k-1}$. Thus, the range from $m_k$ goes from $m_{k-1}+1$ to $N$, and there are $\binom{N-m_{k-1}}{1}$ ways to do this. \hfill $\Box$

We can now calculate the expected value of $M_{k-1}$: \bea \mathbb{E}[M_{k-1}] & \ = \ & \sum_{m_{k-1}=k-1}^{N-1} m_{k-1} {\rm Prob}(M_{k-1} \ = \ m_{k-1}) \nonumber \\ &=& \sum_{m_{k-1}=k-1}^{N-1} m_{k-1} \frac{\binom{m_{k-1}-1}{k-2} (N-m_{k-1})}{\binom{N}{k}} \nonumber \\ &=& \frac{1}{\binom{N}{k}}\sum_{m_{k-1}=k-1}^{N-1} \frac{k-1}{k-1} \frac{m_{k-1}!}{(k-2)!(m_{k-1}-k+1)!}(N-m_{k-1}) \nonumber \\ &=& \frac{k-1}{\binom{N}{k}}\sum_{m_{k-1}=k-1}^{N-1}  \frac{m_{k-1}!}{(k-1)!(m_{k-1}-k+1)!}(N-m_{k-1}) \nonumber \\ &=& \frac{k-1}{\binom{N}{k}}\sum_{m_{k-1}=k-1}^{N-1} \binom{m_{k-1}}{k-1}\binom{N-m_{k-1}}{1} \nonumber \\ && {\rm We \ use \ Identity \ IV} \nonumber \\ &=& \frac{k-1}{\binom{N}{k}} \binom{N+1}{k+1} \nonumber \\ &=& (N+1)\frac{k-1}{k+1}. \eea Therefore, the formula for our estimator of $N$, in terms of $m$ and $k$, coming from $M_{k-1}$ is \bea \widehat{N} \ = \ m_{k-1} \frac{k+1}{k-1} \ - \ 1. \eea
We attach some code to see the mean and variances of the formula using the second largest tank.
\begin{figure}[h]
    \centering
    \includegraphics[width=16cm, frame]{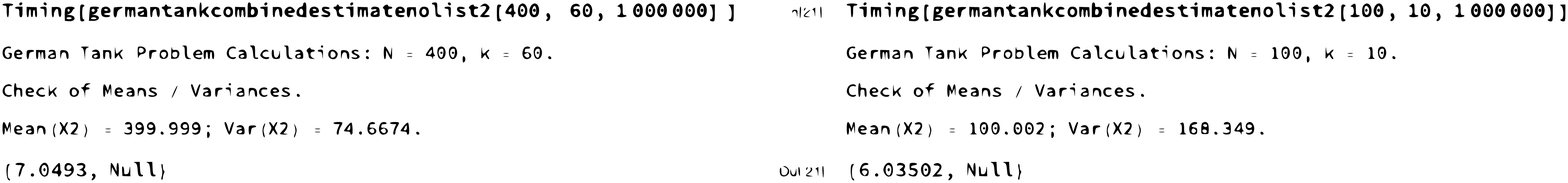}
    \caption{Results for formula using second largest tank.}
    \label{fig:One dimensional Problem code}
\end{figure}
\begin{remark}
Comparing this result to the variance of the formula using the largest tank, we see that the formula using the second largest tank has higher variance. Later, we confirm this finding by theory, and not just by code.
\end{remark}
Calculating similarly, we obtain the formula for the $L\textsuperscript{th}$ largest tank (see Appendix \ref{sec:appendixDlargesttanks} for details): \bea \widehat{N} \ = \ m_{k-L+1}\frac{k+1}{k-L+1} \ - \ 1. \eea

We calculate the variances for statistics arising from the different tank (from largest observed to smallest); we scale each of these so that the mean is $N$, and see which has the least variance. We let $X_k$ be the estimation from using the largest tank, $X_{k-1}$ from using the second largest tank, and so on. To calculate the variances of these formulas, we calculate the variances of $M_k$, the largest tank, $M_{k-1}$, the second largest tank, and so on; we then scale by a multiplicative factor to get the variances we want.

\subsubsection{Variance of $M_k$}: We use Lemma \ref{2.2variancelemma}, to compute the variance. We find
\bea {\rm Var}(M_k) & \ = \ & \mathbb{E}[M_k^2] \ - \ \mathbb{E}[M_k]^2 \nonumber \\ &=& \sum_{m_{k}=k}^{N}m_k^2 {\rm Prob(M_k \ = \ m_k)} \ - \ \bigg[\sum_{m_{k}=k}^{N}m_k {\rm Prob(M_k \ = \ m_k)}\bigg]^2 \nonumber \\ && {\rm We \ know \ the \ second \ term, \ as \ we \ calculated \ it \ earlier.} \nonumber \\ && {\rm We \ just \ have \ to \ calculate \ the \ first \ term.} \nonumber \\ &=& \sum_{m_{k}=k}^{N}m_k^2 \frac{\binom{m_k-1}{k-1}}{\binom{N}{k}} \ - \ \bigg[\frac{k(N+1)}{k+1}\bigg]^2 \nonumber \\ && {\rm We \ use \ Identity \ III \ to \ calculate \ the \ first \ term.} \nonumber \\ && {\rm After \ substituting \ in \ and \ simplifying, \ we \ get} \nonumber \\ &=& = \ N^2 \ - \ (2N-1)\frac{N-k}{k+1} \ \nonumber \\ && \ \ \ \ \ + \ 2\frac{(N-k)(N-k-1)}{(k+1)(k+2)} \ - \ \frac{k^2(N+1)^2}{(k+1)^2} \nonumber \\ &=&  \frac{(k)(N-k)(N+1)}{(k+1)^2(k+2)}. \eea
Now that we have the variance for $M_k$, we easily scale the formula and obtain the variance of $X_k  = m_k (k+1)/k$, as the variance of $a$ times $X$ is $a^2$ times the variance of $X$: \bea {\rm Var} (X_k) & \ = \ & {\rm Var} (M_k) \cdot \frac{(k+1)^2}{k^2} \nonumber \\ &=& \frac{(k)(N-k)(N+1)}{(k+1)^2(k+2)} \cdot  \frac{(k+1)^2}{k^2} \nonumber \\ &=& \frac{(N-k)(N+1)}{(k)(k+2)}. \eea Through similar calculations, we obtain the variance of $X_{k-1}$ (see Appendix \ref{sec:appendixDlargesttanks} for details): \bea {\rm Var} (X_{k-1}) & \ = \ & \frac{2(N-k)(N+1)}{(k+2)(k-1)}. \eea

We see that the estimator using $m_{k-1}$, and more generally using $m_{k-L+1}$, is worse than using $m_k$, as the variance is larger. We can simply plug in some values of $N$ and $k$ and compare the variances. Also, we see that the variance for $X_{k-1}$ is roughly two times the variance of $X_k$, which shows that $X_k$ is a better statistic. We can also compare variances easily by writing some code and numerically exploring. We made a simulation where we sample the $k$ tanks from 1 to $N$ when all tanks are equally likely to be seen. This simulation is attached in the Appendix \ref{appendixEmathematicacode}. Thus, we conclude that if we are only going to use one observed value, it is best to use the largest.

This leads to our second question: can we do better if we create a statistic combining two or more observed values?

\subsection{Weighted Statistic}

Previously, we've only considered using one tank to estimate for $N$. In this section, we create a statistic using the largest and second largest tank values, and show that this weighted statistic isn't better.

Before we state the weighted formula, we set our notation.
\begin{itemize}
  \item $N$ \ = \ Total number of tanks
  \item $k$ \ = \ Numbers of tanks observed
  \item $M_k$ \ = \ Largest Observed Tank
  \item $M_{k-1}$ \ = \ Second largest Observed Tank
  \item $X_k$ \ = \ Statistic to estimate $N$ using $M_k$: $M_k(\frac{k+1}{k})$ \ - \ 1
  \item $X_{k-1}$ \ = \ Statistics to estimate $N$ using $M_{k-1}$: $M_{k-1}(\frac{k+1}{k-1})$ \ - \ 1
\end{itemize}

We state the weighted statistic. Let $\alpha \in [0,1]$ and define the weighted statistic $X_\alpha$ by \bea
X_\alpha \ := \ \alpha X_k \ + \ (1-\alpha) X_{k-1}. \eea From the formula, we see that when $\alpha$ is 1, this collapses to the formula for $N$ using $m_k$, and similarly when $\alpha$ is 0, we get the $m_{k-1}$ formula. In order to see if there is a better estimation, we find the optimal $\alpha$ value, the value that minimizes the variance of $X_\alpha$. If the optimal $\alpha$ value is 1, then we can conclude that the $m_k$ formula is the best we can do to estimate for $N$. We compute the variance of $X$: \bea {\rm Var \ X}_\alpha \ = \ \alpha^2 {\rm Var \ X_k} \ + \ (1-\alpha)^2 {\rm Var \ X_{k-1}} \ + \ 2 \ \alpha (1-\alpha){\rm Cov}(X_k,X_{k-1}). \eea As We have already calculated the variances of $X_k$ and $X_{k-1}$, we just have to calculate the covariance term.

\subsubsection{\textbf{Covariance term}}

We calculate the term, ${\rm Cov}[X_k,X_{k-1}]$ separately first. By Lemma \ref{2.4covariancelemma}, we have \bea {\rm Cov}[X_k,X_{k-1}] & \ = \ & \mathbb{E}[X_k \cdot X_{k-1}] \ - \ \mathbb{E}[X_k] \cdot \mathbb{E}[X_{k-1}]. \eea Recall that \bea X_k \ = \ m_k\bigg(\frac{k+1}{k}\bigg)-1 \ , \ \ \ \  X_{k-1} \ = \ m_{k-1}\bigg(\frac{k+1}{k-1}\bigg)-1. \eea We know that the second term of the covariance, $\mathbb{E}[X_k] \cdot \mathbb{E}[X_{k-1}]$ is $N^2$ because they are both estimation formulas. Thus, we only have to calculate the first term.

We use linearity of expectation to expand. Recall Theorem \ref{2.5linearityofexpectation}, linearity of expectation. \bea \mathbb{E}[X_k \cdot X_{k-1}] & \ = \ & \mathbb{E}\bigg[\bigg(m_k\bigg(\frac{k+1}{k}\bigg)-1\bigg) \cdot \bigg(m_{k-1}\bigg(\frac{k+1}{k-1}\bigg)-1\bigg)\bigg] \nonumber \\ &=& \frac{(k+1)^2}{k(k-1)}\mathbb{E}[m_k \cdot m_{k-1}] \ - \ \frac{k+1}{k}\mathbb{E}[m_k] \ - \ \frac{k+1}{k-1}\mathbb{E}[m_{k-1}] \ + \ \mathbb{E}[1]. \eea

We calculate term by term. We use the joint PDF, recall Theorem \ref{2.6jointprobabilitydensityfunction}, to calculate $\mathbb{E}[m_k\cdot m_{k-1}]$. \bea \mathbb{E}[M_k \cdot M_{k-1}] & \ = \ & \sum_{m_{k}=k}^{N}\sum_{m_{k-1}=k-1}^{m_k-1}m_{k}m_{k-1}  {\rm Prob}(M_{k-1}=m_{k-1},M_{k}=m_{k}) \nonumber \\ && {\rm This \ is \ the \ joint \ probability \ density \ function \ of \ m_k \ and \ m_{k-1}} \nonumber \\ &=& \sum_{m_{k}=k}^{N}\sum_{m_{k-1}=k-1}^{m_k-1}m_{k}m_{k-1}  \frac{\binom{m_{k-1}-1}{k-2}}{\binom{N}{k}} \nonumber \\ && {\rm The \ probability \ is \ the \ following \ because \ after \ selecting \ m_k \ and \ m_{k-1}, \ we} \nonumber \\ && \ \ \ \ {\rm have \ to \ select \ (k-2) \ more \ tanks \ from \ the \ possible \ (m_{k-1}-1) \ tanks.} \nonumber \\ && \ \ \ \ \ {\rm The \ range \ of \ m_{k-1} \ is \ dependent \ on \ the \ value \ of \ m_k \ because} \nonumber \\ && \ \ \ \ \ {\rm m_{k-1} \ \le \ m_k \ is \ always \ satisfied.} \nonumber \\ &=& \frac{1}{\binom{N}{k}}\sum_{m_{k}=k}^{N}m_k\sum_{m_{k-1}=k-1}^{m_k-1}m_{k-1}  \binom{m_{k-1}-1}{k-2} \nonumber \\ &=& \frac{k-1}{\binom{N}{k}}\sum_{m_{k}=k}^{N}m_k\sum_{m_{k-1}=k-1}^{m_k-1}\binom{m_{k-1}}{k-1} \nonumber \\ &=& \frac{k-1}{\binom{N}{k}}\sum_{m_{k}=k}^{N}m_k\binom{m_k}{k} \nonumber \\ && {\rm We \ use \ Identity \ II \ to \ calculate} \nonumber \\ &=& \frac{(k-1)(N+2)(N+1)}{(k+2)}  \ - \ \frac{(k-1)(N+1)}{k+1}. \eea

Now that we've calculated the joint PDF, we know all the terms and find
\bea \mathbb{E}[X_k \cdot X_{k-1}] & \ = \ & \frac{(k+1)^2}{k(k-1)}\bigg[\frac{(k-1)(N+2)(N+1)}{(k+2)} - \frac{(k-1)(N+1)}{k+1}\bigg] \nonumber \\ && \ \ \ \ \ - \ \frac{k+1}{k} \frac{(N+1)k}{k+1} \ - \ \frac{k+1}{k-1}\frac{(N+1)(k-1)}{k+1} \ + \ 1, \eea and of course from our normalizations we have
\bea \mathbb{E}[X_k] \cdot \mathbb{E}[X_{k-1}] & \ = \ & N^2. \eea
\bea \mathbb{E}[X_k \cdot X_{k-1}] - \mathbb{E}[X_k] \cdot \mathbb{E}[X_{k-1}] & \ = \ & \frac{(k+1)^2}{k}\frac{(N+2)(N+1)}{(k+2)}  \ - \ \frac{(k+1)}{k}{(N+1)} \ - (N+1)^2 \nonumber \\ &=& \frac{(n+1)(n-k)}{k(k+2)}. \eea

\subsubsection{Finding optimal alpha}

Now that we've calculated ${\rm Var}(X_k)$, ${\rm Var}(X_{k-1})$, and ${\rm Cov}(X_k,X_{k-1})$, we find the optimal $\alpha$ value (that minimizes the variance of $X_\alpha$). To find the optimal alpha value, we optimize by taking the derivative and finding the unique value that gives the least variance. Let $\alpha_{k,{k-1}}$ denote the specific value of $\alpha$ that minimizes the variance. We have
\bea {\rm Var}(X_\alpha) & \ = \ & \alpha^2 {\rm Var}(X_k) \ + \ (1-\alpha)^2 {\rm Var}(X_{k-1}) + 2\alpha(1-\alpha){\rm Cov}(X_k,X_{k-1}) \nonumber \\ &=& \alpha^2 ({\rm Var}(X_k)+{\rm Var}(X_{k-1})-2 \  {\rm Cov}(X_k,X_{k-1})) \nonumber \\ && \ \ \ \ \ \ \ \ \ + \ 2\alpha ({\rm Cov}(X_k,X_{k-1}) \ - \ {\rm Var}(X_{k-1})) \ + \ {\rm Var}(X_{k-1}). \eea Taking the derivative with respect to $\alpha$ yields \bea {\rm Var}(X_\alpha)' & \ = \ & 2\alpha_{k,{k-1}}({\rm Var}(X_k)+{\rm Var}(X_{k-1})-2 \  {\rm Cov}(X_k,X_{k-1}) \nonumber \\ && \ \ \ \ \ \ + \ 2 \  ({\rm Cov}(X_k,X_{k-1}) \ - \ {\rm Var}(X_{k-1})). \eea

Because we want to find the optimal $\alpha$ value, we solve for ${\rm Var}(X)'  = 0$ (and of course also check the endpoints of $\alpha = 0$ or $1$). After substituting in the formulas and doing some algebra, we see that the optimal alpha value is 1, which is in fact one of our endpoints and corresponds to only using the largest tank. Thus if we use both the largest and second largest values observed we do worse than just using the largest:
\bea \alpha_{k,{k-1}} \ = \ \frac{{\rm Var}(X_{k-1}) \ - \ {\rm Cov}(X_k,X_{k-1})}{{\rm Var}(X_k)+{\rm Var}(X_{k-1})-2 \  {\rm Cov}(X_k,X_{k-1})} \ = \ 1. \eea

Similar calculations hold for other weighted combinations. Therefore, in the discrete one dimensional German Tank problem where we sample without replacement, we conclude that the formula using the largest tank does the best job.

\section{Continuous One Dimensional Problem}

We now explore our first generalization of the German Tank Problem and consider a continuous one dimensional analogue. In the original formulation, the serial numbers of the tanks were all integers drawn from 1 to $N$. We now consider a continuous version, where we select $k$ tanks from the interval $[0, N]$ with $N$ unknown. Thus our goal is to find a statistic to estimate $N$. We discuss the effectiveness of various statistics and compare the continuous formulas to the discrete ones; the scaling factor is the same in the continuous and discrete one dimensional problems.

\subsection{Formulas Using Largest and Second Largest Observations.}

We begin by estimating using the largest and second largest tanks by using the CDF method to find the PDF. In the continuous case, to find the PDF, we take the derivative of the CDF.

\subsubsection{Formula from Largest}: We first find the CDF by computing the probability that all the tanks are at most $m_k$: \bea {\rm Prob}(M_k \le m_k) \ = \ \bigg(\frac{m_k}{N}\bigg)^k; \eea this is because in the continuous case we can view all $k$ observations as independent, and the probability any is at most $m_k$ is just $m_k/N$.

Taking the derivative gives the PDF: \bea f(m_k) \ = \ PDF_{M_k}(m_k) \ = \ CDF_{M_k}(m_k)' \ = \ \frac{k m_k^{k-1}}{N^k}. \eea

Now that we have the PDF, we calculate the expected value of $m_k$: \bea \mathbb{E}[M_k] & \ = \ & \int_{0}^{N} m_k f(m_k) \,d m_k \nonumber \\ &=& \int_{0}^{N} m_k \frac{k m_k^{k-1}}{N^k} \,d m_k \nonumber \\ &=& \frac{k}{N^k} \int_{0}^{N} m_k^k \,d m_k \nonumber \\ &=& \frac{k}{k+1}\cdot N. \eea Thus we obtain
\bea \widehat{N} \ = \ m \cdot \bigg(1+\frac{1}{k}\bigg). \eea

\begin{remark}
Comparing the continuous formula to the discrete, the only difference is that there is a $-1$ in the discrete formula. This difference is due to in the discrete case we sample from tanks numbered from 1 to $N$ while in the continuous case wehave chosen to have the interval to be $[0, N]$.
\end{remark}

\subsubsection{Formula using Second Largest}: We first find the CDF by computing the probability that the second largest tank is at most $m_{k-1}$. There are two possibilities -- all the tanks are less than $m_{k-1}$, or one tank (and there is $\binom{N}{1}$ ways to choose which of the $k$ tanks that is) is larger than $m_{k-1}$ and the rest are $m_{k-1}$ or smaller. Thus \bea CDF_{M}(m_{k-1}) & \ = \ & {\rm Prob}(M_{k-1} \le X) \nonumber \\ &=& \bigg(\frac{X}{N}\bigg)^k \ + \ \binom{k}{1} \bigg(\frac{X}{N}\bigg)^{k-1}\frac{N-X}{N}.\eea

We take the derivative of the CDF in order to find the PDF. After standard integration, we find \bea \widehat{N} \ = \ m_{k-1} \cdot \frac{k+1}{k-1}. \eea

\begin{remark}
For the continuous formula using $m_{k-1}$, we see that there is a difference arising from a factor of $-1$ from the discrete case. The same explanation of the difference applies here too. Also, as we saw in the discrete case, because the continuous and discrete formula are essentially the same formulas, the variance for the formula using $m_{k-1}$ is larger.
\end{remark}

\subsection{Continuous weighted formula}:

As we did in the discrete one dimensional case, we see if constructing a statistic that is a linear combination of $m_k$ and $m_{k-1}$ does a better job estimating $N$. Similar to before, the best value is again when $\alpha = 1$, meaning that the formula using only $m_k$ gives the least variance and there is no benefit to including $m_{k-1}$. The calculation is similar to the discrete weighted problem; see Appendix \ref{sec:appendixDlargesttanks}.

\section{Two Dimensional Discrete Generalizations}

We now generalize the German Tank Problem to two dimensions, after which it will be easy to extend to higher dimensions. We look at the discrete and continuous versions of the square and the circle. We find for each problem which statistic gives the best estimate for $N$, and compare the formulas of the discrete two dimensional square to that of the one dimensional discrete square. Note that if we use all the terms in the calculation, we would not get a clean closed form, as we have many different powers of $m$, making the equation uninvertible. In order to avoid this, we approximate using the main term and get formulas for fixed $k$ and $N$ tending to infinity.

\subsection{Square Problem}

We first consider the case of the square from $(1,1)$ to $(N,N)$ as the natural generalization of the one-dimensional set $\{1, \dots, N\}$. Thus there are $N^2$ pairs, we select $k$ of them without replacement. We call the two components the $X$ and the $Y$ list and use the pairs to find the best estimate for $N$. We investigated two statistics: the largest number from the two lists, and a recursive method where we start with a estimate of $N$ and use the largest $L$ to estimate for $N$ again until the value of our estimate for $N$ stabilizes.

\ \\

\subsubsection{Maximum from Lists:} The motivation of looking at the largest observed component in the two dimensional square comes from the one dimensional German tank problem, where looking at the largest tank gave the most accurate estimation. To calculate the formula, we use the CDF method. The CDF method in the discrete case is slightly different from the one in the continuous case. The statistic we look at is the largest observed component of the $X$ list and $Y$ list, which we denote by $M$. To calculate the probability that the largest observed component is at most $m$, instead of taking the derivative to find the PDF as we did in the continuous case, in the discrete case we calculate \be {\rm Prob}(M \le m) \ - \ {\rm Prob}(M \le m-1)\ee to get the ${\rm Prob}({\rm Max}=m)$. The PDF is computed similarly as in previous arguments, giving \bea {\rm PDF}_{M}(m) & \ = \ & {\rm Prob}(M \le m) \ - \ {\rm Prob}(M \le m-1) \nonumber \\ &=& \frac{\binom{m^2}{k}}{\binom{N^2}{k}} \ - \ \frac{\binom{(m-1)^2}{k}}{\binom{N^2}{k}}. \eea

Thus the expected value of $M$ is
\bea \mathbb{E}[M] & \ = \ & \sum_{m= \lceil \sqrt{k} \rceil}^{N} m \cdot {\rm PDF}_{M}(M = m) \nonumber \\ &=& \sum_{m= \lceil \sqrt{k} \rceil}^{N} \frac{m \binom{m^2}{k} \ - \ m \binom{(m-1)^2}{k}}{\binom{N^2}{k}} \nonumber \\ &=& \sum_{m= \lceil \sqrt{k} \rceil}^{N} \frac{m \binom{m^2}{k} \ - \ (m-1) \binom{(m-1)^2}{k}}{\binom{N^2}{k}} \ - \ \sum_{m= \lceil \sqrt{k} \rceil}^{N} \frac{\binom{(m-1)^2}{k}}{\binom{N^2}{k}} \nonumber \\ && {\rm We \ telescope \ the \ first \ term.} \nonumber \\ &=& \frac{N \binom{N^2}{k} \ - \ (\lceil \sqrt{k} \rceil -1) \binom{(\lceil \sqrt{k} \rceil-1)^2}{k}}{\binom{N^2}{k}} \ - \ \sum_{m= \lceil \sqrt{k} \rceil}^{N} \frac{\binom{(m-1)^2}{k}}{\binom{N^2}{k}}. \eea

We now determine the second term above; it is
\bea \sum_{m= \lceil \sqrt{k} \rceil}^{N} \frac{\binom{(m-1)^2}{k}}{\binom{N^2}{k}} & \ = \ & \frac{1}{\binom{N^2}{k}}\sum_{m= \lceil \sqrt{k} \rceil}^{N-1} \binom{m^2}{k} \nonumber \\ && {\rm Range \ for \ m \ starts \ at \ \lceil \sqrt{k} \rceil \ because \ if \ m \ is \ \lceil \sqrt{k} \rceil-1, \ we \ get \ 0.}  \nonumber \\ &=& \frac{1}{k!\binom{N^2}{k}} \sum_{m= \lceil \sqrt{k} \rceil}^{N-1} m^2(m^2-1)\cdots(m^2-(k-1)). \eea

We use Lemma \ref{2.9boundslemma} to provide upper and lower bounds for the sum
\bea \sum_{m= \lceil \sqrt{k} \rceil}^{N-1} \bigg(m^{2k} - m^{2k-2}\bigg(\frac{k(k-1)}{2}\bigg)\bigg) \le  \sum_{m= \lceil \sqrt{k} \rceil}^{N-1} m^2(m^2-1)\cdots(m^2-(k-1) \le \sum_{m= \lceil \sqrt{k} \rceil}^{N-1} m^{2k}, \eea and we now use the Euler-Maclaurin formula, Lemma \ref{2.8EulerMaclaurin}, to approximate the sums with integrals, and bound the error of the approximation in terms of the derivative of the function at the boundary points. We take $p \ = \ 2$, as for the range we are studying this gives an excellent bound and an expression that is easy to work with.

\begin{remark}
When we calculate the upper and lower bounds, we see that the first order of the upper and lower are the same, which is why we can set these bounds.
\end{remark}

We state a result for a general sum using Euler-Maclaurin, and then will choose appropriate values of the constants. We can just plug in values of $w$ and $y$, which are the powers of $m$, and we also plug in $a$ and $b$ in order to make the calculation easier. \bea \sum_{i=a}^{b} \bigg[m^{w}-c\cdot m^{y}\bigg] & \ = \ & \int_{a}^{b} (x^w-c \cdot x^y) \,dx \ + \ \frac{a^w+b^w-c \cdot a^y-c \cdot b^y}{2} \nonumber \\ && \ \ \ \ \ + \ \frac{B_2}{2}(f'(b)-f'(a)) \ + \ R_2 \nonumber \\ &=& \frac{b^{w+1}-a^{w+1}}{w+1} \ - \ \frac{c \cdot b^{y+1}-c \cdot a^{y+1}}{y+1} \nonumber \\ && \ \ + \ \frac{a^w+b^w-c \cdot a^{y}-c \cdot b^y}{2} \ + \ \frac{1}{12} \bigg(w \cdot b^{w-1}\nonumber \\ && \ \ \ \ \ - \ c \cdot y \cdot b^{y-1} \ - \ w \cdot a^{w-1} + c \cdot y \cdot a^{w-1}\bigg) \ + \ R_2. \eea
We calculate the remainder term for the general equation:
\bea |R_2| & \ \le \ & \frac{2 \zeta(2)}{(2 \pi)^2} \int_{a}^{b} |f^2(x)| \,dx \nonumber \\ & \le & \frac{1}{12} \cdot \bigg[\int_{a}^{b} \bigg(w(w-1)m^{w-2}-c \cdot y \cdot (y-1)m^{y-2}\bigg) \,dm \bigg) \nonumber \\ & \le & \frac{1}{12}\bigg[w \cdot b^{w-1}-c \cdot y \cdot b^{y-1} - w \cdot a^{w-1}+c \cdot y \cdot a^{y-1}\bigg]. \eea
Now that we've calculated the main term and remainder of the general equation, we calculate the lower and upper bounds using the general Euler-Maclaurin formula.

We first do the lower bound: \bea & & \sum_{m=\lceil \sqrt{k} \rceil}^{N-1}\bigg(m^{2k}-m^{2k-2}\bigg(\frac{k(k-1)}{2}\bigg)\bigg) \nonumber \\ &=& \frac{(N-1)^{2k+1}-(\lceil \sqrt{k} \rceil)^{2k+1}}{2k+1} \ - \ \bigg(\frac{k(k-1)}{2}\bigg)\cdot\frac{(N-1)^{2k-1}-(\lceil \sqrt{k} \rceil)^{2k-1}}{2k-1} \nonumber \\ && \ \ \ + \ \frac{\lceil \sqrt{k} \rceil^{2k}+(N-1)^{2k}}{2} \ - \ \frac{k(k-1)}{2} \cdot \frac{\lceil \sqrt{k} \rceil^{2k-2}+(N-1)^{2k-2}}{2} \nonumber \\ && \ \ + \ \frac{1}{12}\bigg[2k(N-1)^{2k-1}-2k \cdot\ (\lceil \sqrt{k} \rceil)^{2k-1}-k(k-1)^2(N-1)^{2k-3} \nonumber \\ && \ \ +k(k-1)^2(\lceil \sqrt{k} \rceil)^{2k-3}\bigg] \ + \ R_2, \eea
where we have the following bound for the remainder term:
\bea |R_2| & \ \le \ & \frac{1}{12}\bigg[2k\cdot(N-1)^{2k-1} - \frac{k(k-1)}{2}(2k-2)(N-1)^{2k-3} \ - \ 2k \cdot (\lceil \sqrt{k} \rceil)^{2k-1} \nonumber \\ && \ \ + \ \frac{k(k-1)}{2}\cdot(2k-2)\cdot (\lceil \sqrt{k} \rceil)^{2k-3} \bigg] \nonumber \\ & \le & \frac{1}{12} \bigg[2k\cdot(N-1)^{2k-1}-2k(\lceil \sqrt{k} \rceil)^{2k-1}-k(k-1)^2(N-1)^{2k-3} \nonumber \\ && \ +k(k-1)^2(\lceil \sqrt{k} \rceil)^{2k-3} \bigg]. \eea

We now calculate the main and remainder term of the upper bound:
\bea \sum_{m=\lceil \sqrt{k} \rceil}^{N-1}m^{2k} & \ = \ & \frac{(N-1)^{2k+1}-(\lceil \sqrt{k} \rceil)^{2k+1}}{2k+1} \ + \ \frac{(\lceil \sqrt{k} \rceil)^{2k}+(N-1)^{2k}}{2} \nonumber \\ && \ \ + \ \frac{1}{12} \bigg(2k \cdot (N-1)^{2k-1}-2k \cdot(\lceil \sqrt{k} \rceil)^{2k-1} \bigg) \ + \ R_2 \nonumber \\ &=& \frac{(N-1)^{2k+1}-(\lceil \sqrt{k} \rceil)^{2k+1}}{2k+1} \ + \ \frac{(\lceil \sqrt{k} \rceil)^{2k}+(N-1)^{2k}}{2} \nonumber \\ && \ \ + \ \frac{k}{6}\bigg[(N-1)^{2k-1}-(\lceil \sqrt{k} \rceil)^{2k-1} \bigg] \ + \ R_2, \eea
with the remainder term bounded by
\bea |R_2| & \ \le \ & \frac{1}{12}\bigg[2k\cdot(N-1)^{2k-1}-2k\cdot(\lceil \sqrt{k} \rceil)^{2k-1} \bigg] \nonumber \\ &\le & \frac{k}{6} \bigg[(N-1)^{2k-1}-(\lceil \sqrt{k} \rceil)^{2k-1} \bigg]. \eea

Now that we have the upper and lower bounds, we use the main term (which as $N\to\infty$ dominates the error terms) to find an equation of $m$ in terms of $N$ and $k$: \bea \sum_{m=\lceil \sqrt{k} \rceil}^{N-1} m^{2k} & \ \approx \ & \frac{(N-1)^{2k+1}-(\lceil \sqrt{k} \rceil)^{2k+1}}{2k+1} \nonumber \\ && {\rm Because \ we \ assumed \ that \ k \ is \ fixed, \ if \ N \ is \ very \ large} \nonumber \\ &&{\rm the \ other \ terms \ are \ negligible.} \nonumber \\ & \approx & \frac{(N-1)^{2k+1}}{2k+1}. \eea

We plug this estimation back into the formula for $\mathbb{E}[M]$: \bea \mathbb{E}[M] & \ = \ & \frac{N \binom{N^2}{k} \ - \ (\lceil \sqrt{k} \rceil -1) \binom{(\lceil \sqrt{k} \rceil-1)^2}{k}}{\binom{N^2}{k}} \ - \ \sum_{m= \lceil \sqrt{k} \rceil}^{N} \frac{\binom{(m-1)^2}{k}}{\binom{N^2}{k}} \nonumber \\ && {\rm We \ use \ the \ same \ argument \ as \ above} \nonumber \\ && {\rm to \ say \ that \ the \ term \ with \ k \ is \ negligible.} \nonumber \\ &\approx& N \ - \ \sum_{m= \lceil \sqrt{k} \rceil}^{N} \frac{\binom{(m-1)^2}{k}}{\binom{N^2}{k}} \nonumber \\ && {\rm We \ plug \ in \ the \ estimate \ of \ the \ second \ term} \nonumber \\ &\approx& N \ - \ \frac{\frac{(N-1)^{2k+1}}{2k+1}}{\binom{N^2}{k} \cdot k!} \nonumber \\ &\approx& N\bigg[1 \ - \ \frac{1}{2k+1}\bigg(1-\frac{1}{N}\bigg)^{2k+1}\bigg] \nonumber \\ && {\rm We \ expand \ out \ the \ first \ two \ terms \ to \ estimate \ for \ (1-1/N)^{2k+1}.} \nonumber \\ && {\rm If \ we \ use \ more \ terms, \ the \ accuracy \ slightly \ increases, \ but \ the \ equation} \nonumber \\ && {\rm will \ not \ be \ invertible. \ Thus, \ we \ only \ write \ the \ first \ two \ terms.} \nonumber \\ & \approx & N\bigg[1-\frac{1}{2k+1}\bigg(1-\frac{2k+1}{N}\bigg)\bigg] \nonumber \\ &=& N\bigg[\frac{2k}{2k+1}\bigg] \ + \ 1 \eea We obtain a good estimate of $(1-1/N)^{2k+1}$ by using two terms.  Inverting the equation, we get  \bea \widehat{N} \ = \ \frac{2k+1}{2k}(m-1) \eea Now that we have a estimation formula for $\widehat{N}$, we run some simulations to see how accurate this estimation formula is. See Appendix \ref{appendixEmathematicacode} for the simulation. We see that the two dimensional formula does well as the variance is small. In the next subsubsection, we compare the one dimensional formula to the two dimensional formula and see which one does better.

\subsubsection{Comparing Formulas} We compare the one dimensional formula and the discrete square formula to see which one does better. We have to make sure that we are making correct comparisons (apples to apples), because a pair gives two data points whereas a point gives one. Also, we want to make sure that both formulas estimate for $N$. For the $N$ by $N$ square, we pick $k$ pairs, which gives us $2k$ components of $N^2$ pairs. For the one dimensional case, we pick $2k$ tanks from $N^2$ possible tanks. This will give us a estimate for $N^2$, and we take the square root to find the estimate for $N$. By comparing these two quantities, we make sure that we observe $2k$ data points. We run code (see Appendix \ref{appendixEmathematicacode}) to compare the two formulas.

\begin{figure}[h]
    \centering
    \includegraphics[width=16cm, frame]{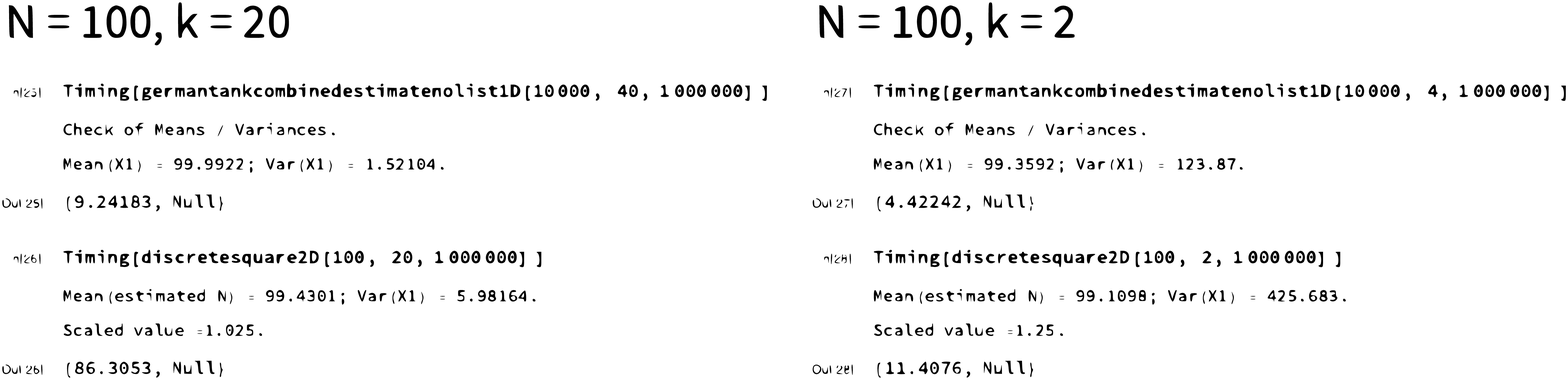}
    \caption{Comparing results from square formula to one dimensional formula}
    \label{fig:One dimensional Problem code}
\end{figure}
\bigskip
\bigskip
\bigskip
\bigskip
\bigskip
\bigskip
\bigskip

\begin{remark}
From the code, we see that the 1-dimensional case does a better job than the two dimensional case, as the one dimensional case has lower variance. The difference is clearly visible when $k$ is a very small value such as 2. We've seen from code that the 1-dimensional formula does better, and we confirm this by theory. We paste some results from the code.
\end{remark}

\subsubsection{Recursive Argument:}
For another approach, we start with an initial estimate for $N$, which we call $N_0$. We construct a formula to recursively generate new estimates of $N$ from previous; thus let $N_1$ be our next guess. We investigate if the values of $N$ converge, and if so if they converges to a more accurate estimate.

We first transform the $N^2$ pairs into a list of numbers from 1 to $N^2$. For each pair ($X$,$Y$), we write the tanks as $(X-1) \ + \ N(Y-1) \ + \ 1$. This way, we can represent all tanks from 1 to $N^2$, as this maps the $N^2$ pairs uniquely to the integers from 1 to $N^2$. Unfortunately, when we use observed tank values to estimate, we do not know the value of $N$ so we cannot immediately use this formula. Instead we replace $N$ with our estimate. With that estimate, we express a new $N$ with the largest observed component. As before, let $M$ denote the maximum of the two lists. Then \bea {\rm Prob}(M = L) \ = \ \frac{\binom{L-1}{k-1}}{\binom{N^2}{k}}. \eea Thus
\bea \mathbb{E}[L] & \ = \ & \sum_{L=k}^{N^2} L \cdot \frac{\binom{L-1}{k-1}}{\binom{N^2}{k}} \nonumber \\ && {\rm We \ calculate \ and \ get} \nonumber \\ &=& k \cdot \frac{N^2+1}{k+1}. \eea Thus
\bea \widehat{N} \ = \ \sqrt{\mathbb{E}[L]\cdot \frac{k+1}{k}-1}. \eea In order to create a iterative process, we plug in $\mathbb{E}[L] \ = \ {\rm Max}X + ({\rm current \ estimation \ for \ N})({\rm Max}Y-1)$. We rewrite our estimation for $\widehat{N}$ \bea \widehat{N} \ = \ \sqrt{\bigg[{\rm Max}X + ({\rm current \ estimation \ for \ N})({\rm Max}Y-1)\bigg] \cdot \frac{k+1}{k}-1}. \eea Now, we've got a recursive function of $N$. We can use the preliminary estimation of $N$ to get another estimate for $N$. Therefore, by starting with a value of $N$, we can continue to produce estimates of $N$. The hope is that by producing more values of $N$, the values converge to the actual number $N$. To see how well this process does, we attached the code for simulation in the Appendix \ref{appendixEmathematicacode}. Using the simulation, we plugged in different values of Max($X$) and Max($Y$). However, though the results converge, they do not do a better job as often the value it converges to is off from the actual $N$.

\subsubsection{\textbf{Continuous Square Problem}} The continuous square problem is slightly different from the discrete square as we can pick points from $(0,0)$ to $(N,N)$. To calculate the PDF, we use the CDF method by calculating the CDF and taking the derivative. Let $m$ denote the largest component observed: \bea CDF_M(m) \ = \ {\rm Prob}(M \le m) \ = \ \left(\frac{m^2}{N^2}\right)^k \ = \ \frac{m^{2k}}{N^{2k}}, \eea and thus \bea {\rm PDF}_M(m) \ = \ {\rm CDF_M}'(m) \ = \ \frac{2k\cdot m^{2k-1}}{N^{2m}}. \eea

Therefore \bea \mathbb{E}[m] & \ = \ & \int_{0}^{N} m \cdot \frac{2k \cdot m^{2k-1}}{N^{2k}}\,dm \nonumber \\ && {\rm We \ calculate \ and \ get:} \nonumber \\ &=& \frac{2k}{2k+1}\cdot N. \eea Solving yields \bea \widehat{N} \ = \ m\cdot \frac{2k+1}{2k}. \eea

\begin{remark}
We see that the scaling factor for the continuous case is the same as the scaling factor for the discrete case. The scaling factor of $(2k+1)/(2k)$ is reasonable, and we see this by comparing this formula to the one dimensional formula. In the one dimensional case, the scaling factor was $(k+1)/(k)$, which is larger than $(2k+1)/(2k)$. However, because in the two dimensional case we are looking at the largest of both components, we have more data points and therefore we will likely get a larger $M$ value. Therefore, in the two dimensional case, we would have to scale by a value smaller than $(k+1)/k$, and scaling by $(2k+1)/(2k)$ makes sense.
\end{remark}

\subsection{Circle Problem}

\subsubsection{\textbf{Discrete Circle Problem}} The goal of the discrete circle problem is to find a formula that estimates the radius. We assume the circle is centered at $(0,0)$ with radius $r$ and we select $k$ different lattice points contained in the circle without replacement. We look at $X^2+Y^2$ as our statistic, because the resulting values are integers, and of course we can then take a square-root at the end.

We let $m_1=X^2+Y^2$. However, some elementary number theory enters in two dimensions and not all values of $m_1$ are attainable. Notice that $X^2$ and $Y^2$ are each 0 or 1 modulo 4, so any attainable $m_1$ is either 0, 1 or 2 modulo 4.

The number of lattice points inside a circle with radius $r$ and center $(0,0)$ is the well studied Gauss Circle problem  \cite{Co}.  Let $P(r)$ be the number of lattice points inside a circle in plane of radius $r$ and center at the origin, i.e., \bea P(r) \ =\ {\rm Number \ of} \ ({(q, n) \in \mathbb{Z}^2 | q^2 + n^2 \le r^2}) . \eea The number of lattice points inside the circle is well estimated by the area of the circle, $\pi r^2$; the challenge is determining the size of the error.

We have \bea P(r) \ = \ \pi r^2 \ + \ E(r). \eea We do not need the best known results, so we write $E(r)$ as $O(r^\delta)$ where $(0\le \delta \le 1)$, using Big-Oh notation (see Definition \ref{2.10bigOnotation}); the current world record has $.5 < \delta < .63$. Note $P(m) \ = \ CDF_{M_{m_1}}(M \le m_1)$, where $M$ is the random variable that is the value of $X^2 + Y^2$. We see that the $PDF_{M}(m)$ is $P(m_1)-P(m_1-1)$. Therefore, if we have $m_1 \equiv 3  \ ({\rm mod \ 4})$ , then the $PDF_{M}(m_1) = 0$, and thus we don't have to worry about this case, though for ease we do include it below.

We calculate the expected value of $M$: \bea \mathbb{E}[M] & \ = \ & \sum_{m_1=0}^{r^2} m_1 \cdot {\rm Prob}(M=m_1) \nonumber \\ &=& [P(1)-P(0)]+[2P(2)-2P(1)]+ \dots + r^2 [P(r^2)-r^2 P(r^2-1)] \nonumber \\ &=& r^2P(r^2)-[P(1)+P(2)+\dots+P(r^2-1) \nonumber \\ &=& r^2 \ - \ \frac{1}{\binom{\pi r^2+O(r^\delta)}{k}} \sum_{m_1=0}^{r^2} \binom{\pi(m-1)+O((m-1)^\delta)}{k}. \eea

We estimate the summation part of $\mathbb{E}[M]$ by using Lemma \ref{2.9boundslemma}: \bea & \ \ & \sum_{m_1=0}^{r^2} \binom{\pi(m_1-1)+O(m_1^\delta)}{k} \nonumber \\ &=& \frac{1}{k!} \sum_{m_1=0}^{r^2} (\pi(m_1-1)+O((m_1-1)^\delta)) \dots (\pi(m_1-1)+O((m_1-1)^\delta)-(k-1)). \eea Note we have
\bea &  & (\pi(m_1-1)+O((m_1-1)^\delta))^k-\frac{k(k-1)}{2}(\pi(m_1-1)+O((m_1-1)^\delta))^{k-1} \nonumber\\ && \ \  \le \ (\pi(m_1-1)+O((m_1-1)^\delta))\dots(\pi(m_1-1)+O((m_1-1)^\delta-(k-1)) \nonumber \\ & \le & (\pi(m_1-1)+O((m_1-1)^\delta))^{k}. \eea

We use Euler-Maclaurin to obtain upper and lower bounds. As the analysis is similar to what we have done earlier, we omit the bounding of the error terms and concentrate on the main term: \bea \sum_{m_1=0}^{r^2}(\pi(m_1-1)+O((m_1-1)^\delta))^k  & \ \approx \ & \int_{0}^{r^2} (\pi(m_1-1)+O((m_1-1)^\delta))^k \,dm \nonumber \\ && {\rm We \ neglect \ the \ Big \ O \ term, \ as \ it \ doesn't} \nonumber\\ && {\rm affect \ the \ final \ calculation \ by \ much} \nonumber \\ & \approx & \pi^k \int_{0}^{r^2} (m_1-1)^k \nonumber \\ && {\rm After \ evaluating \ the \ integrals, \ we \ get} \nonumber \\ & \approx & \pi^k \cdot \frac{(r^2-1)^{k+1}}{k+1}. \eea Now that we've estimated the main term, we calculate $\mathbb{E}[M]$. \bea \mathbb{E}[M] & \ \approx \ & r^2-\frac{1}{\binom{\pi r^2+O(r^\delta)}{k}\cdot k!} \cdot (\pi^k\cdot\frac{(r^2-1)^{k+1}}{k+1}) \nonumber \\ & \approx & r^2 \ - \ \frac{\pi^k\cdot \frac{(r^2-1)^{k+1}}{k+1}}{\pi^k \cdot (r^2)^k} \nonumber \\ & \approx & r^2\bigg[1-\frac{1}{k+1}\cdot (1-\frac{k+1}{r^2})\bigg] \nonumber \\ & \approx & r^2 \cdot \frac{k}{k+1}+1, \eea  and thus \bea \widehat{r} \ = \ \sqrt{\frac{k+1}{k}(m_1-1)}. \eea

\begin{remark} \label{remark6.2}
We analyze the formula from the discrete circle problem. The formula is quite interesting, because we have a square root involved, and unlike other cases, the continuous and discrete setting have different scaling factors. First, we have the square root of $\mathbb{E}[M]$ which is a value similar to $\sqrt{X^2 + Y^2}$, and is similar to $r$. Also, the scaling factor for the discrete case is $\sqrt{(k+1)/k}$, which is similar to $(2k+1)/(2k)$ as we take the square root because taking the square root decreases the value by a little bit. Though the formula doesn't completely align with the continuous circle, this formula makes a lot of sense, and the difference likely results from the different statistics that we looked at for the discrete and continuous circle.
\end{remark}

\subsubsection{\textbf{Continuous Circle Problem}}

The continuous circle problem has similar conditions as the discrete circle problem, but we can select any points contained in the circle; the points don't necessarily have to be lattice points. We approach the continuous circle problem similarly as the continuous square problem. We look at  $m_2=\sqrt{X^2+Y^2}$ because that is the formula for the radius. Let $m_2$ be the largest observed statistic, and $M$ the corresponding random variable. Then \bea {\rm Prob(\sqrt{X^2+Y^2} \le m_2)} \ = \ \frac{(m_2^2 \pi)^k}{(r^2 \pi)^k} \ = \ \frac{m_2^{2k}}{r^{2k}}. \eea Thus \bea {\rm PDF}_M(m_2) \ = \ {\rm CDF}_M'(m_2) \ = \ \frac{2k\cdot m_2^{2k-1}}{r^{2k}}. \eea
The calculation for the expected value is the same as the continuous square, so we omit it, and we find \bea \widehat{r} \ = \ m_2 \cdot \frac{2k+1}{2k}. \eea

\begin{remark}\label{remark6.3}
We compare the continuous circle formula to the discrete circle formula. They don't look similar, as the discrete formula has a square root. Note that $m_1=m_2^2$, by the values of statistics we look at. Also, if we taylor expand $\sqrt{1+1/k}$, we get $\sqrt{1+1/k} \ = \ 1+1/2k+\dots$. In the discrete circle, as the value of $k$ gets very very large, the $\sqrt{1+1/k}$ looks like $1+1/2k$, which is the formula for the continuous circle. Thus, though the formulas look different, if we take the limit as $k$ gets large, we see how similar these are.
\end{remark}

We run some code to check the formula and see how well it does.

\begin{figure}[h]
    \centering
    \includegraphics[width=16cm, frame]{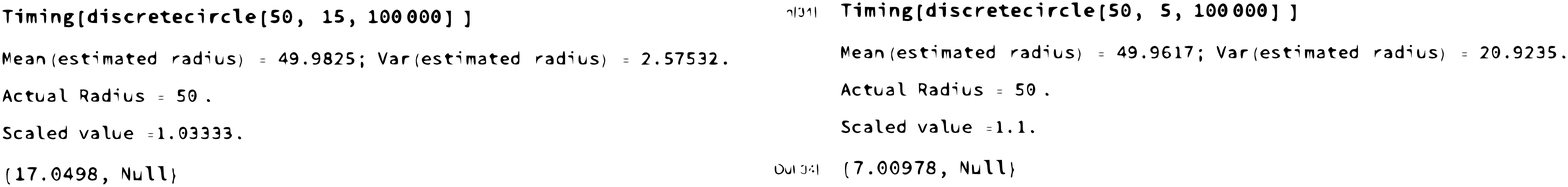}
    \caption{Code for discrete circle.}
    \label{fig:Results of comparison}
\end{figure}
\bigskip
\bigskip
\bigskip

\section{Higher Dimension Version}

\subsection{Generalized Square Problem}

\subsubsection{Discrete Square Problem:} To calculate the formula for the discrete $L$-dimensional square, we use similar strategies as the two dimensional square. We let $M$ be the the largest observed coordinate. Thus
\bea {\rm Prob}(M=m) & \ = \ & F(m)-F(m-1) \nonumber \\ &=& \frac{\binom{m^L}{k}}{\binom{N^L}{k}} \ - \ \frac{\binom{(m-1)^L}{k}}{\binom{N^L}{k}} \nonumber \\ &=& \frac{\binom{m^L}{k}-\binom{(m-1)^L}{k}}{\binom{N^L}{k}}. \eea

The mean is thus
\bea \mathbb{E}[M] & \ = \ & \sum_{n=\lceil \sqrt[L]{k} \rceil}^{N} m \cdot {\rm PDF}_M(m) \nonumber \\ &=& \sum_{n=\lceil \sqrt[L]{k} \rceil}^{N} \frac{m \binom{m^L}{k}-m\binom{(m-1)^L}{k}}{\binom{N^L}{k}} \nonumber \\ &=& \sum_{n=\lceil \sqrt[L]{k} \rceil}^{N} \frac{m \binom{m^L}{k}-(m-1)\binom{(m-1)^L}{k}}{\binom{N^L}{k}} \ - \ \sum_{n=\lceil \sqrt[L]{k} \rceil}^{N} \frac{\binom{(m-1)^L}{k}}{\binom{N^L}{k}} \nonumber \\ &=& \frac{1}{\binom{N^L}{k}}\bigg[N\binom{N^L}{k}-(\lceil \sqrt[L]{k} \rceil)\binom{\lceil \sqrt[L]{k} \rceil)^L}{k}\bigg] \ - \ \sum_{n=\lceil \sqrt[L]{k} \rceil}^{N} \frac{\binom{(m-1)^L}{k}}{\binom{N^L}{k}}. \eea

We use Lemma \ref{2.9boundslemma} to bound the sums: \bea \sum_{n=\lceil \sqrt[L]{k} \rceil}^{N} \binom{(m-1)^L}{k} & \ = \ & \sum_{n=\lceil \sqrt[L]{k} \rceil}^{N-1} \binom{m^L}{k} \nonumber \\ &=& \frac{1}{k!} \sum_{n=\lceil \sqrt[L]{k} \rceil}^{N-1} (m^L)(m^L-1) \cdots(m^L-(k-1)). \eea

We have
\bea m^{Lk}-\frac{k(k-1)}{2}m^{Lk-L} \ \le \ (m^L)(m^L-1) \cdots(m^L-(k-1) \ \le \ m^{Lk}. \eea We apply Euler-Maclaurin to the lower bound: \bea &  & \sum_{n=\lceil \sqrt[L]{k} \rceil}^{N-1}\bigg[m^{Lk}-\frac{k(k-1)}{2}m^{Lk-L}\bigg] \nonumber \\ & =& \frac{(N-1)^{Lk+1}-(\lceil \sqrt[L]{k} \rceil)^{Lk+1}}{Lk+1} \ - \ \frac{k(k-1)}{2}\frac{(N-1)^{Lk-L+1}-(\lceil \sqrt[L]{k} \rceil)^{Lk-L+1}}{Lk-L+1} \nonumber \\ && + \ \frac{(N-1)^{Lk}+(\lceil \sqrt[L]{k} \rceil)^{Lk}}{2} \ - \ \frac{k(k-1)}{2}\frac{(N-1)^{Lk-L}+(\lceil \sqrt[L]{k} \rceil)^{Lk-L}}{2} \nonumber \\ && \ + \ \frac{1}{12}\bigg[Lk\cdot(N-1)^{Lk-1}-\frac{k(k-1)}{2}(Lk-L)(N-1)^{Lk-L-1} \nonumber \\ && \ - \ Lk \cdot (\lceil \sqrt[L]{k} \rceil)^{Lk-1} \ + \ \frac{k(k-1)}{2}\cdot(Lk-L) \cdot (\lceil \sqrt[L]{k} \rceil)^{Lk-1} \bigg] \ + \ R_2. \eea

We bound the remainder term: \bea |R_2| & \ \le \ & \frac{1}{12} \bigg[Lk \cdot (N-1)^{Lk-1} \ - \ \frac{k(k-1)}{2} \cdot (Lk-L)\cdot (N-1)^{Lk-L-1}\nonumber \\ && \ - \ (Lk)(\lceil \sqrt[L]{k} \rceil)^{Lk-1}+\frac{k(k-1)}{2}(Lk-L)(\lceil \sqrt[L]{k} \rceil)^{Lk-L-1}\bigg]. \eea

Similarly we calculate the main term for the upper bound: \bea && \sum_{n=\lceil \sqrt[L]{k} \rceil}^{N-1} m^{Lk} \nonumber \\ &=& \frac{(N-1)^{Lk+1}-(\lceil \sqrt[L]{k} \rceil)^{Lk+1}}{Lk+1} \ + \ \frac{(N-1)^{Lk}+(\lceil \sqrt[L]{k} \rceil)^{Lk}}{2} \nonumber \\ && + \ \frac{1}{12}\bigg[Lk \cdot (N-1)^{Lk-1}-Lk\cdot(\lceil \sqrt[L]{k} \rceil)^{Lk-1} \bigg] \ + \ R_2. \eea
We bound the remainder term by
\bea |R_2| \ \le \ \frac{1}{12}\bigg[Lk\cdot(N-1)^{Lk-1}-Lk\cdot(\lceil \sqrt[L]{k} \rceil)^{Lk-1} \bigg]. \eea

Now that we have fairly tight upper and lower bounds, we use the main term to find an equation of $m$ in terms of $N$ and $k$. As we are only interested in the case of $k$ fixed and $N$ tending to infinity, we can just keep the main term in the analysis below. We find \bea \sum_{m=\lceil \sqrt[L]{k} \rceil}^{N-1} m^{Lk} & \ \approx \ & \frac{(N-1)^{Lk+1}-(\lceil \sqrt[L]{k} \rceil)^{Lk+1}}{Lk+1} \nonumber \\ && {\rm Because \ we \ assumed \ that \ k \ is \ fixed, \ if \ N \ is \ very \ large} \nonumber \\ &&{\rm the \ term \ with \ k \ is \ negligible.} \nonumber \\ & \approx & \frac{(N-1)^{Lk+1}}{Lk+1} \eea

We plug this estimation back into the formula for $\mathbb{E}[M]$: \bea \mathbb{E}[M] & \ = \ & \frac{N \binom{N^L}{k} \ - \ (\lceil \sqrt[L]{k} \rceil -1) \binom{(\lceil \sqrt[L]{k} \rceil-1)^L}{k}}{\binom{N^L}{k}} \ - \ \sum_{m= \lceil \sqrt[L]{k} \rceil}^{N} \frac{\binom{(m-1)^L}{k}}{\binom{N^L}{k}} \nonumber \\ && {\rm We \ use \ the \ same \ argument \ as \ above} \nonumber \\ && {\rm to \ say \ that \ the \ term \ with \ k \ is \ negligible.} \nonumber \\ &\approx& N \ - \ \sum_{m= \lceil \sqrt[L]{k} \rceil}^{N} \frac{\binom{(m-1)^L}{k}}{\binom{N^L}{k}} \nonumber \\ && {\rm We \ plug \ in \ the \ estimate \ of \ the \ second \ term} \nonumber \\ &\approx& N \ - \ \frac{\frac{(N-1)^{Lk+1}}{Lk+1}}{\binom{N^L}{k} \cdot k!} \nonumber \\ &\approx& N\bigg[1 \ - \ \frac{1}{Lk+1}\bigg(1-\frac{1}{N}\bigg)^{Lk+1}\bigg] \nonumber \\ && {\rm We \ expand \ out \ the \ first \ two \ terms \ to \ estimate \ for \ (1-1/N)^{Lk+1}.} \nonumber \\ && {\rm If \ we \ use \ more \ terms, \ the \ accuracy \ slightly \ increases, \ but \ the \ equation} \nonumber \\ && {\rm will \ not \ be \ invertible. \ Thus, \ we \ only \ write \ the \ first \ two \ terms.} \nonumber \\ & \approx & N\bigg[1-\frac{1}{Lk+1}\bigg(1-\frac{Lk+1}{N}\bigg)\bigg] \nonumber \\ &=& N\bigg[\frac{Lk}{Lk+1}\bigg] \ + \ 1 \eea

Inverting the equation, we get \bea \widehat{N} \ = \ \frac{Lk+1}{Lk}(m-1). \eea

\subsubsection{Continuous Square Problem:} The continuous problem is easily generalized to $L$ dimensions. We select $k$ tuples of length $L$ and let $M$ be the largest observed component: \bea {\rm Prob}(M \le m) \ = \ \left(\frac{m^L}{N^L}\right)^k \ = \ \frac{m^{Lk}}{N^{Lk}}. \eea \bea {\rm PDF}_M(m) \ = \ {\rm CDF}_M'(m) \ = \ \frac{Lk \cdot m^{Lk-1}}{N^{Lk}}. \eea

Thus \bea \mathbb{E}[M] \ = \ \int_{0}^{N} m \cdot \frac{Lk \cdot m^{Lk-1}}{N^{Lk}} \,dm. \eea We solve the integral and we get: \bea \widehat{N} \ = \ m\cdot \frac{Lk+1}{Lk}. \eea

\begin{remark}
We see that the scaling factors for the discrete $L$ dimensional square continuous $L$ dimensional square are the same. This scaling factor makes sense because if we have $L$ dimensions, we have many more components to choose from. Thus by scaling by $(Lk+1)/(Lk)$, which is a value smaller than $(2k+1)/(2k)$, and is close to 1, we get a good estimate for $N$.
\end{remark}

\subsection{Generalized Circle Problem}

\subsubsection{Discrete Circle Problem}: We look at the generalized $L$ dimensional circle problem. We study a statistic similar to the two dimensional circle: \bea X_1^2+X_2^2+ \cdots + X_L^2 \ = \ m_1 \eea We see that all values of $m_1$ are integers. Using this statistic, we end up estimating for $r^2$, so after getting the formula, we have to take the square root of $m_1$ to get a estimation for $r$. Let $P(r)$ be the number of lattice points inside an $L$-ball, a $L$-dimensional sphere with radius $r$. We use the volume of the $L$-ball to find a approximate value for the number of lattice points inside the $L$-ball. The formula for the volume of a $L$- ball is \cite{Gi} \bea V(n) \ = \ \frac{\pi^\frac{L}{2}}{\Gamma(\frac{L}{2}+1)}r^L. \eea We use this formula to find $P(r)$. We denote the bounds with big O notation. \bea P(r) \ = \ \frac{\pi^\frac{L}{2}}{\Gamma(\frac{L}{2}+1)}r^L \ + \ O(r^\delta). \eea We now calculate the density using the CDF method. \bea {\rm PDF}_{M_{m_1}} \ = \ P(m_1)-P(m_1-1). \eea We calculate the expected value \bea \mathbb{E}[M] & \ = \ & \sum_{m_1=0}^{r^2} m_1 \cdot {\rm Prob}(M=m_1) \nonumber \\ &=& \frac{\sum_{m_1=0}^{r^2} m_1 \cdot \binom{\frac{\pi^\frac{L}{2}}{\Gamma(\frac{L}{2}+1)}m_1 \ + \ O(m_1^\delta)}{k} -\binom{\frac{\pi^\frac{L}{2}}{\Gamma(\frac{L}{2}+1)}(m_1-1) \ + \ O((m_1-1)^\delta)}{k}}{{\binom{\frac{\pi^\frac{L}{2}}{\Gamma(\frac{L}{2}+1)}r^L \ + \ O(r^\delta)}{k}}} \nonumber \\ &=& \frac{1}{{\binom{\frac{\pi^\frac{L}{2}}{\Gamma(\frac{L}{2}+1)}r^L \ + \ O(r^\delta)}{k}}} \sum_{m_1=0}^{r^2} \bigg[m_1 \binom{\frac{\pi^\frac{L}{2}}{\Gamma(\frac{L}{2}+1)}m_1 \ + \ O(m_1^\delta)}{k} \nonumber \\ && \ \ - \ (m_1-1) \binom{\frac{\pi^\frac{L}{2}}{\Gamma(\frac{L}{2}+1)}(m_1-1) \ + \ O((m_1-1)^\delta)}{k}\bigg] \nonumber \\ && \ - \ \frac{1}{{\binom{\frac{\pi^\frac{L}{2}}{\Gamma(\frac{L}{2}+1)}r^L \ + \ O(r^\delta)}{k}}} \sum_{m_1=0}^{r^2} \binom{\frac{\pi^\frac{L}{2}}{\Gamma(\frac{L}{2}+1)}(m_1-1) \ + \ O((m_1-1)^\delta)}{k} \nonumber \\ && {\rm We \ telescope \ and \ get} \nonumber \\ &=& r^2 \ - \ \frac{1}{{\binom{\frac{\pi^\frac{L}{2}}{\Gamma(\frac{L}{2}+1)}r^L \ + \ O(r^\delta)}{k}}} \sum_{m_1=0}^{r^2} \binom{\frac{\pi^\frac{L}{2}}{\Gamma(\frac{L}{2}+1)}(m_1-1) \ + \ O((m_1-1)^\delta)}{k}. \eea

We calculate the summation part. \bea & \ \ & \sum_{m_1=0}^{r^2} \binom{\frac{\pi^\frac{L}{2}}{\Gamma(\frac{L}{2}+1)}(m_1-1) \ + \ O((m_1-1)^\delta)}{k} \nonumber \\ &=& \frac{1}{k!} \sum_{m_1=0}^{r^2} \bigg(\frac{\pi^\frac{L}{2}}{\Gamma(\frac{L}{2}+1)}(m_1-1) \ + \ O((m_1-1)^\delta)\bigg)\dots \nonumber \\ && \ \bigg(\frac{\pi^\frac{L}{2}}{\Gamma(\frac{L}{2}+1)}(m_1-1) \ + \ O((m_1-1)^\delta)-(k-1)\bigg). \eea

We use Lemma \ref{2.9boundslemma} to set bounds. \bea & \ \ & \sum_{m_1=0}^{r^2} \binom{\frac{\pi^\frac{L}{2}}{\Gamma(\frac{L}{2}+1)}(m_1-1) \ + \ O((m_1-1)^\delta)}{k}^k \nonumber \\ && \ -\frac{k(k-1)}{2}\sum_{m_1=0}^{r^2} \binom{\frac{\pi^\frac{L}{2}}{\Gamma(\frac{L}{2}+1)}(m_1-1) \ + \ O((m_1-1)^\delta)}{k}^{k-1} \nonumber \\ & \le & \frac{1}{k!} \sum_{m_1=0}^{r^2} \bigg(\frac{\pi^\frac{L}{2}}{\Gamma(\frac{L}{2}+1)}(m_1-1) \ + \ O((m_1-1)^\delta)\bigg) \nonumber \\ && \ \dots \bigg(\frac{\pi^\frac{L}{2}}{\Gamma(\frac{L}{2}+1)}(m_1-1) \ + \ O((m_1-1)^\delta)-(k-1)\bigg) \nonumber \\ & \le & \sum_{m_1=0}^{r^2} \binom{\frac{\pi^\frac{L}{2}}{\Gamma(\frac{L}{2}+1)}(m_1-1) \ + \ O((m_1-1)^\delta)}{k}^k. \eea

We apply Euler-Maclaurin to both bounds. We omit the calculation here, and just show the approximation using the main term. \bea \sum_{m_1=0}^{r^2} \bigg(\frac{\pi^\frac{L}{2}}{\Gamma(\frac{L}{2}+1)}(m_1-1) \ + \ O((m_1-1)^\delta)\bigg)^k & \ \approx \ & \int_{0}^{r^2} \bigg(\frac{\pi^\frac{L}{2}}{\Gamma(\frac{L}{2}+1)}(m_1-1) \ + \ O((m_1-1)^\delta)\bigg)^k \,dm_1 \nonumber \\ && {\rm We \ neglect \ the \ Big \ O \ term} \nonumber \\ & \approx & \int_{0}^{r^2} \bigg(\frac{\pi^\frac{L}{2}}{\Gamma(\frac{L}{2}+1)}(m_1-1)\bigg)^k \,dm_1 \nonumber \\ & \approx \ & \bigg(\frac{\pi^\frac{L}{2}}{\Gamma(\frac{L}{2}+1)}\bigg)^k\cdot \frac{(r^2-1)^{k+1}-(-1)^{k+1}}{k+1} \nonumber \\ & \approx & \bigg(\frac{\pi^\frac{L}{2}}{\Gamma(\frac{L}{2}+1)}\bigg)^k\cdot \frac{(r^2-1)^{k+1}}{k+1}. \eea

We calculate the expected value. \bea \mathbb{E}[M] & \ = \ & r^2 \ - \ \frac{1}{k! \binom{\frac{\pi^\frac{L}{2}}{\Gamma(\frac{L}{2}+1)}r^L \ + \ O(r^\delta)}{k}} \cdot \bigg(\frac{\pi^\frac{L}{2}}{\Gamma(\frac{L}{2}+1)}\bigg)^k\cdot \frac{(r^2-1)^{k+1}}{k+1} \nonumber \\ & \approx & r^2 \ - \ \frac{1}{\bigg(\frac{\pi^\frac{L}{2}}{\Gamma(\frac{L}{2}+1)}\bigg)^k \cdot r^{2k}} \cdot \bigg(\frac{\pi^\frac{L}{2}}{\Gamma(\frac{L}{2}+1)}\bigg)^k \cdot \frac{(r^2-1)^{k+1}}{k+1} \nonumber \\ & \approx & r^2 \ - \ \frac{1}{k+1} \cdot \frac{(r^2-1)^{k+1}}{r^{2k}} \nonumber \\ & \approx & r^2 \bigg[1-\frac{1}{k+1}\bigg(1-\frac{1}{r^2}\bigg)^{k+1}\bigg] \nonumber \\ && {\rm We \ use \ the \ first \ two \ terms \ of \ the \ binomial \ expansion}  \nonumber \\ & \approx & r^2 \bigg[1-\frac{1}{k+1}\bigg(1-\frac{k+1}{r^2}\bigg)\bigg] \nonumber \\ & \approx & r^2 \cdot \frac{k}{k+1}+1. \eea We invert the relationship to get \bea \widehat{r} \ = \ \sqrt{(m_1-1) \cdot \frac{k+1}{k}}. \eea

\begin{remark}
Notice that the formula for discrete $L$-dimensional circle problem isn't dependent on $N$.
\end{remark}


\subsubsection{\textbf{Continuous $L$-dimensional circle problem}} We select $k$ tuples of length $L$ where each component is contained in the $L$-dimensional circle. Let $m$ be the largest observed component. We look at $m_2=\sqrt{X_1^2 \ + \ X_2^2 \ + \cdots + X_L^2}$ as out statistic. We use the volume of the $L^{th}$ dimensional circle to calculate the PDF. Recall the formula of the $L$-ball. We see that \bea V(n) \ = \ \frac{\pi^\frac{L}{2}}{\Gamma(\frac{L}{2}+1)}r^L. \eea

We calculate the CDF: \bea {\rm Prob}(\sqrt{X_1^2 \ + \ X_2^2 \ + \cdots + X_L^2} \le m_2) \ = \ \frac{(\frac{\pi^\frac{L}{2}}{\Gamma(\frac{L}{2}+1)}m_2^L)^k}{(\frac{\pi^\frac{L}{2}}{\Gamma(\frac{L}{2}+1)}r^L)^k} \ = \ \frac{m_2^{Lk}}{r^{Lk}}. \eea
The rest of the calculation is identical as the $L$-dimensional square problem. Thus, we omit the calculation. After calculating, we get: \bea \hat{r} \ = \ m_2 \cdot \frac{Lk+1}{Lk}. \eea

\begin{remark}
Notice that the L dependence in the formula is very small as the formula is $1+1/Lk$ and if L and k are reasonably large, then the scaling factor is very very close to 1.
\end{remark}

\section{Acknowledgements}
I, Anthony Lee, was the student and this research was mentored by Professor Steven J Miller. I truly thank Professor Miller for this research opportunity, the research topic, and his continued hard work in making the research process very intriguing and smooth. I also thank my parents and teachers who have been very supportive and encouraging about my studies.

\appendix
\section{Basic Probability Review}\label{sec:appendixBprob}

\subsection{Variance and Covariance}

\begin{lemma}\label{lem:varformula} The variance of a random variable satisfies
\bea {\rm Var}(X) \ := \ \mathbb{E}[(X-\mathbb{E}[X])^2]  \ = \ \mathbb{E}[X^2] \ - \ \mathbb{E}[X]^2. \eea
\end{lemma}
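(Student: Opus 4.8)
The plan is to expand the square in the definition of the variance and then invoke linearity of expectation (Theorem \ref{2.5linearityofexpectation}). First I would introduce the abbreviation $\mu := \mathbb{E}[X]$ and stress that $\mu$ is a fixed real number, not a random quantity; this is the conceptual point that makes everything go through, since it allows $\mu$ and $\mu^2$ to be treated as constants once they appear inside a further expectation. Starting from the definition $\mathrm{Var}(X) = \mathbb{E}[(X-\mu)^2]$, I would expand the random variable inside the expectation algebraically:
\[
(X-\mu)^2 \ = \ X^2 \ - \ 2\mu X \ + \ \mu^2.
\]

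Next I would apply linearity of expectation to the three resulting terms. Because $\mu$ is a constant, the middle term contributes $\mathbb{E}[2\mu X] = 2\mu\,\mathbb{E}[X] = 2\mu^2$, and the constant term contributes $\mathbb{E}[\mu^2] = \mu^2$ (both facts being instances of Theorem \ref{2.5linearityofexpectation} applied with the identity function and with constant functions, respectively). Substituting these back yields
\[
\mathbb{E}[(X-\mu)^2] \ = \ \mathbb{E}[X^2] \ - \ 2\mu^2 \ + \ \mu^2 \ = \ \mathbb{E}[X^2] \ - \ \mu^2,
\]
and recalling $\mu = \mathbb{E}[X]$ gives precisely $\mathbb{E}[X^2] - \mathbb{E}[X]^2$, as claimed.

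There is no genuine obstacle here; the result is a short, standard computation. The only step meriting a word of care is the handling of the constant $\mu$: one must remember that a constant factors out of an expectation and that the expectation of a constant is that constant, which is exactly what linearity of expectation provides. It is also worth noting implicitly that the identity presupposes $\mathbb{E}[X^2]$ (and hence $\mathbb{E}[X]$) is finite, for otherwise the variance is not well defined; under that standing assumption every term above is finite and the rearrangement is fully justified.
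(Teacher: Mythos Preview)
Your proof is correct and follows essentially the same approach as the paper: expand the square inside the expectation and apply linearity of expectation term by term. The only cosmetic difference is that you introduce the abbreviation $\mu$ for $\mathbb{E}[X]$ and add a remark about finiteness, whereas the paper carries $\mathbb{E}[X]$ throughout.
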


\begin{proof}
\bea {\rm Var}(X) & \ = \ & \mathbb{E}[(X-\mathbb{E}[X])^2] \nonumber \\ &=& \mathbb{E}[X^2 \ - \ 2X \ \mathbb{E}[X] \ + \ \mathbb{E}[X]^2] \nonumber \\ &=& \mathbb{E}[X^2] \ - \ 2 \ \mathbb{E}[X]\mathbb{E}[X] \ + \ \mathbb{E}[X]^2 \nonumber \\ &=& \mathbb{E}[X^2] \ - \ \mathbb{E}[X]^2. \eea
\end{proof}

\begin{lemma}\label{lem:covariance} The covariance of two random variables satisfies
\bea {\rm Cov}(X,Y) \ = \ \mathbb{E}[X-\mathbb{E}[X]] \cdot \mathbb{E}[Y-\mathbb{E}[Y]] \ = \ \mathbb{E}[XY] \ - \ \mathbb{E}[X]\mathbb{E}[Y]. \eea
\end{lemma}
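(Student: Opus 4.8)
The plan is to establish the covariance identity exactly as in the proof of Lemma \ref{lem:varformula} for the variance: begin from the definition in Definition \ref{2.3covariancedefinition}, expand the product of deviations, and then collapse the result using linearity of expectation (Theorem \ref{2.5linearityofexpectation}). The target is to show that the quantity $\mathbb{E}[(X-\mathbb{E}[X])(Y-\mathbb{E}[Y])]$ equals $\mathbb{E}[XY]-\mathbb{E}[X]\mathbb{E}[Y]$; I read the middle expression in the displayed statement as shorthand for this defining form.

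First I would multiply out the two factors inside the expectation, obtaining
\bea (X-\mathbb{E}[X])(Y-\mathbb{E}[Y]) \ = \ XY \ - \ X\,\mathbb{E}[Y] \ - \ \mathbb{E}[X]\,Y \ + \ \mathbb{E}[X]\,\mathbb{E}[Y]. \eea
Next I would take the expectation of both sides and apply Theorem \ref{2.5linearityofexpectation}, treating $\mathbb{E}[X]$ and $\mathbb{E}[Y]$ as the real constants they are so that they factor out of the outer expectation. This turns $\mathbb{E}[X\,\mathbb{E}[Y]]$ into $\mathbb{E}[X]\mathbb{E}[Y]$ and $\mathbb{E}[\mathbb{E}[X]\,Y]$ into $\mathbb{E}[X]\mathbb{E}[Y]$, while the final constant term contributes another $\mathbb{E}[X]\mathbb{E}[Y]$. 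Collecting the four terms, the two middle contributions cancel one copy of $\mathbb{E}[X]\mathbb{E}[Y]$ against the last, leaving $\mathbb{E}[XY]-\mathbb{E}[X]\mathbb{E}[Y]$, as claimed.

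There is no genuine obstacle here; the computation is simply the two-variable analogue of the variance formula already proved in Lemma \ref{lem:varformula}. The only points requiring care are bookkeeping ones: recognizing that $\mathbb{E}[X]$ and $\mathbb{E}[Y]$ are fixed numbers rather than random variables, so that they may legitimately be pulled outside the expectation, and implicitly assuming that $\mathbb{E}[X]$, $\mathbb{E}[Y]$, and $\mathbb{E}[XY]$ all exist and are finite, which is exactly the hypothesis that justifies invoking linearity of expectation term by term. With those observations in place the identity follows immediately.
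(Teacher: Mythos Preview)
Your proposal is correct and follows essentially the same approach as the paper: expand the product $(X-\mathbb{E}[X])(Y-\mathbb{E}[Y])$, apply linearity of expectation term by term treating $\mathbb{E}[X]$ and $\mathbb{E}[Y]$ as constants, and collect to obtain $\mathbb{E}[XY]-\mathbb{E}[X]\mathbb{E}[Y]$. Your reading of the middle expression as shorthand for the defining expectation is also the right interpretation.
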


\begin{proof}
We use the linearity properties of the expected values. We have \bea {\rm Cov}(X,Y) & \ = \ & \mathbb{E}[X-\mathbb{E}[X]] \cdot \mathbb{E}[Y-\mathbb{E}[Y]] \nonumber \\ &=& \mathbb{E}[XY \ - \ X\cdot \mathbb{E}[Y] \ - \ Y \cdot \mathbb{E}[X] \ + \ \mathbb{E}[X]\mathbb{E}[Y]] \nonumber \\ &=& \mathbb{E}[XY] \ - \ \mathbb{E}[X] \cdot \mathbb{E}[Y] \ - \ \mathbb{E}[X] \cdot \mathbb{E}[Y] \ + \ \mathbb{E}[X]\mathbb{E}[Y] \nonumber \\ &=& \mathbb{E}[XY] \ - \ \mathbb{E}[X]\cdot\mathbb{E}[Y].  \eea
\end{proof}

\subsection{Identities from Pascal's Triangle}

\begin{lemma}\label{pascalidentity}(Pascal's Identity) We have \bea \binom{n+1}{r} \ = \ \binom{n}{r} \ + \ \binom{n}{r-1}. \eea  \end{lemma}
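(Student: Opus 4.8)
The plan is to prove Pascal's identity directly from the factorial definition of the binomial coefficient, as this is the cleanest route and it sets up exactly the common-denominator manipulation that underlies the hockey-stick identity used throughout the derivations above. First I would expand the two terms on the right as $\binom{n}{r} = \frac{n!}{r!\,(n-r)!}$ and $\binom{n}{r-1} = \frac{n!}{(r-1)!\,(n-r+1)!}$. The key algebraic step is to place both over the common denominator $r!\,(n-r+1)!$: the first fraction is multiplied top and bottom by $(n-r+1)$ and the second by $r$, so that the combined numerator becomes $n!\,(n-r+1) + n!\,r = n!\,(n+1) = (n+1)!$. Since the denominator $r!\,(n-r+1)!$ equals $r!\,\big((n+1)-r\big)!$, the sum collapses to $\binom{n+1}{r}$, which is precisely the claim.

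A second, self-contained route is the standard combinatorial double-counting argument, which I would include because it makes the identity transparent and mirrors the counting style used in the derivation of the original tank formula. Fix a set of $n+1$ objects and single out one distinguished element $x$. Every $r$-element subset either contains $x$ or it does not; the subsets containing $x$ are obtained by choosing the remaining $r-1$ elements from the other $n$ objects, giving $\binom{n}{r-1}$, while those avoiding $x$ choose all $r$ elements from the other $n$, giving $\binom{n}{r}$. Summing these two disjoint cases counts all $\binom{n+1}{r}$ subsets, yielding the identity with no factorial algebra at all.

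There is essentially no substantive obstacle here; the identity is elementary and both arguments are short. The only point requiring a little care is the treatment of boundary values of $r$. If one adopts the standard convention $\binom{n}{j} = 0$ whenever $j < 0$ or $j > n$, then the identity continues to hold at the edges (for instance $r = 0$ or $r = n+1$), and the combinatorial argument remains valid because the degenerate case simply contributes zero subsets. I would state this convention explicitly before invoking the identity in the later telescoping and hockey-stick computations, so that those sums behave correctly at their endpoints.
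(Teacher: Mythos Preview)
Your proposal is correct, and your second (combinatorial) argument is essentially identical to the paper's proof, which singles out a ``special'' element among the $n+1$ and splits the $r$-subsets according to whether they contain it. The paper explicitly mentions but does not carry out the factorial expansion you give first, so you have simply supplied both routes it alludes to.
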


\begin{proof}
Pascal's identity can be proved directly by expanding out the factorials, but it can also be proved by story. Say we have $n+1$ people and we want to select $k$ of them. Designate one person as special and the other $n$ as ordinary. There are two cases: we select the special person, in which case we need $k-1$ additional people all drawn from the $n$ ordinary people, or we do not select the special person and now need $k$ ordinary people. As these are mutually exclusive the number of ways, $\binom{n+1}{k}$, is equal to the sum: $\ncr{1}{1}\ncr{n}{k-1} + \ncr{1}{0}\ncr{n}{k}$.
\end{proof}

\begin{lemma}\label{lem:fourhockeystickidentity}(Hockey Stick Identity) We have \bea \sum_{i=r}^{n} \binom{i}{r} \ = \ \binom{n+1}{r+1}. \eea \end{lemma}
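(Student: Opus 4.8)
The plan is to prove the Hockey Stick Identity by induction on $n$ (for fixed $r$), leaning directly on Pascal's Identity (Lemma \ref{pascalidentity}), which has just been established. The base case is $n = r$, where the left-hand side is the single term $\binom{r}{r} = 1$ and the right-hand side is $\binom{r+1}{r+1} = 1$, so the two agree. This handles the smallest index at which the sum is nonempty.

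For the inductive step, I would assume the identity holds for some $n \geq r$, namely $\sum_{i=r}^{n} \binom{i}{r} = \binom{n+1}{r+1}$, and then examine the sum up to $n+1$. Peeling off the top term gives $\sum_{i=r}^{n+1} \binom{i}{r} = \binom{n+1}{r+1} + \binom{n+1}{r}$, where I have substituted the inductive hypothesis for the first $n+1-r$ terms. Now Pascal's Identity applied in the form $\binom{n+1}{r+1} + \binom{n+1}{r} = \binom{n+2}{r+1}$ collapses this to exactly the claimed right-hand side at index $n+1$, closing the induction.

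As an alternative that avoids induction entirely, I would note the telescoping route: rewriting Pascal's Identity as $\binom{i}{r} = \binom{i+1}{r+1} - \binom{i}{r+1}$ turns the sum into $\sum_{i=r}^{n} \left[ \binom{i+1}{r+1} - \binom{i}{r+1} \right]$, which telescopes to $\binom{n+1}{r+1} - \binom{r}{r+1}$; since $\binom{r}{r+1} = 0$ the boundary term vanishes and we are done. One could also give a combinatorial story in the spirit of the Pascal proof above: count the $(r+1)$-element subsets of $\{1,\dots,n+1\}$ by conditioning on the largest element, which if equal to $i+1$ forces the remaining $r$ elements to come from $\{1,\dots,i\}$, contributing $\binom{i}{r}$ and yielding the identity upon summing.

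There is no serious obstacle here; the identity is elementary once Pascal's Identity is in hand, and each of the three approaches is a few lines. The only point requiring a moment's care is the bookkeeping at the boundary — verifying the base case $n=r$ in the inductive version, or recognizing that the telescoping sum's lower boundary term $\binom{r}{r+1}$ is zero — so I would make that explicit rather than leave it implicit.
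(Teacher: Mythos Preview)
Your primary inductive argument is correct and matches the paper's proof essentially line for line: same base case $n=r$, same inductive step of peeling off $\binom{n+1}{r}$ and collapsing via Pascal's Identity. The telescoping and combinatorial alternatives you sketch are nice extras but go beyond what the paper presents.
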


We can prove the hockey stick identity using induction and Pascal's identity.

\begin{proof}
Base case: $n = r$ \bea \binom{r}{r} \ = \ \binom{r+1}{r+1} \ = \ 1. \eea

Inductive step: Assume it is true for $n = k$: \bea \sum_{i=r}^{n} \binom{i}{r} \ = \ \binom{n+1}{r+1}. \eea
Using our assumption, we prove the case where $n=k+1$: \bea \sum_{i=r}^{n+1} \binom{i}{r} \ = \ \binom{n+2}{r+1}. \eea

We rewrite the left hand side to facilitate using the inductive hypothesis: \bea \sum_{i=r}^{n+1} \binom{i}{r} & \ = \ & \sum_{i=r}^{n} \binom{i}{r} \ + \ \binom{n+1}{r} \nonumber \\ && {\rm We \ use \ our \ inductive \ hypothesis} \nonumber \\ &=& \binom{n+1}{r+1} \ + \ \binom{n+1}{r} \nonumber \\ &=& \binom{n+2}{r+1} \nonumber \\ && {\rm We \ use \ Pascal's \ identity, \ and \ thus} \nonumber \\ && {\rm we've \ proved \ the \ hockey \ stick \ identity}. \eea
\end{proof}


\section{Portfolio Theory}\label{sec:appendixAportfoliotheory}

Consider the case of two stock $X_1, X_2$ with the same mean return and variances $\sigma_{1}$ and $\sigma_{2}$; the variances are not necessarily the same. For simplicity we assume the two stocks' performances are independent of each other, though in general we need to consider covariances. We construct a weighted portfolio $X_\alpha := \alpha X_1 + (1-\alpha) X_2$, with $\alpha \in [0,1]$. It is easy to see that the expected value of $X_\alpha$ is that of the two stocks; our goal is to find $\alpha$ that minimizes the variance of $X_\alpha$ and thus gives us the most certainty in knowing our future performance. Of course, such a strategy decreases the possibility of getting a much larger than expected return, but it also minimizing the possibility of having a significantly smaller return.

Let’s say we have two options. The first is we are guaranteed  \$500,000. The second is we have a 50\% chance of getting \$1,000,000, and a 50\% chance of getting \$0 dollars. The expected value for both is \$500,000 dollars. Though it may depend on each person’s financial situation, we see that taking the \$500,000 dollars has no risk. For some, this may be life changing (and the marginal utility of the second \$500,000 is almost surely less than the first).

The hypothetical situation above is a simple example of modern portfolio theory, a practical method for selecting investments where we maximize the overall return with an acceptable level of risk. This theory was pioneered by American economist Harry Markowitz; see \cite{In}. A key idea of this theory is diversification. Because most investments are either high risk and high return or low risk and low return, Markowitz argued that perhaps investors could achieve best profit with acceptable risk by choosing an optimal mix of the investments.

The expected return of the portfolio is calculated as a weighted sum of the return of individual assets. Let’s say that we have four assets with different values of expected return. To calculate the risk of the portfolio, an investor needs to know the variances of each assets and also the correlation of any two assets, so six correlation values in total. Because of the asset correlations, the total portfolio risk is lower than what would be calculated by a weighted sum.

We return to the simple case of two stocks both with mean $\mu$, variances $\sigma_i$ (we may assume $0 \le \sigma_2 \le \sigma_1$) and we assume the two stocks are independent. Then if $X_\alpha = \alpha X_1 + (1-\alpha) X_2$ we have \bea {\rm Var}(X_\alpha) \ = \ \alpha^2 \sigma_1^2 + (1-\alpha)^2 \sigma_2^2. \eea To find the minimum variance we check the endpoints ($\alpha = 0$ or 1) and the critical points from the derivative of the variance is zero: that happens when \be 2 \alpha \sigma_1^2 - 2(1-\alpha) \sigma_2^2 \ = \ 0, \ee which gives a critical value of \be \alpha_\ast \ = \ \frac{\sigma_2^2}{\sigma_1^2+\sigma_2^2} \ee We now see which value of alpha gives the minimum variance.
\begin{itemize}
    \item When $\alpha=0$ \ , \ ${\rm Var}(X_\alpha) \ = \ \sigma_2^2$
    \item When $\alpha=1$ \ , \ ${\rm Var}(X_\alpha) \ = \ \sigma_1^2$
    \item When $\alpha=\frac{\sigma_2^2}{\sigma_1^2+\sigma_2^2}$ \ , \bea {\rm Var}(X_\alpha) & \ = \ & \bigg(\frac{\sigma_2^2}{\sigma_1^2+\sigma_2^2}\bigg)^2 \sigma_1^2 \ + \ \bigg(1-\frac{\sigma_2^2}{\sigma_1^2+\sigma_2^2}\bigg)^2 \sigma_2^2 \nonumber \\ &=& \bigg(\frac{\sigma_2^2}{\sigma_1^2+\sigma_2^2}\bigg)^2 \sigma_1^2 \ + \ \bigg(\frac{\sigma_1^2}{\sigma_1^2+\sigma_2^2}\bigg)^2 \sigma_2^2 \nonumber \\ &=& \frac{\sigma_1^2 \sigma_2^2(\sigma_1^2+ \sigma_2^2)}{(\sigma_1^2+\sigma_2^2)^2}  \nonumber \\ &=& \frac{\sigma_1^2 \sigma_2^2}{\sigma_1^2+\sigma_2^2}  \eea
\end{itemize}
From our assumption that $0 < \sigma_2 \le \sigma_1$, we see that $\sigma_2^2  \leq  \sigma_1^2$. Now, we compare $\frac{\sigma_1^2\sigma_2^2}{\sigma_1^2+\sigma_2^2}$ and $\sigma_2^2$.

Straightforward algebra shows \bea \frac{\sigma_1^2\sigma_2^2}{\sigma_1^2+\sigma_2^2} & \  < \ & \sigma_2^2; \eea to see this multiply both sides by $\sigma_1^2+\sigma_2^2$,  and subtract $\sigma_1^2\sigma_2^2$ and obtain $0 < \sigma_2^4$. Thus the variance of the weighted quantity is always less than the variance of the smaller one! If the two variances are equal, the new variance is half of that.

Our German Tank problem is slightly different, as the probability we choose certain tanks is dependent on each other. For example, one condition that we have is that the largest has to be a larger value than the second largest tank, which makes them dependent. We see if we can apply this motivation of using weights to see if we can reduce variance of the outcome even in two dependent random variables.
\section{Proof of Identities}\label{sec:appendixCproofsofidentities}

\begin{proof} \textbf{Identity I.} \bea \sum_{m_k=k}^{N}\binom{m-b}{k-c} & \ = \ & \binom{k-b}{k-b} \ + \ \binom{k-b+1}{k-b} \ + \ \cdots \ + \ \binom{N-b}{k-c} \nonumber\\
&=& \binom{k-c}{k-c} \ + \ \binom{k-c+1}{k-c} \ + \ \cdots \ + \ \binom{N-b}{k-c} \nonumber\\ && \ \ \  - \left[  \binom{k-c}{k-c} \ + \ \binom{k-c+1}{k-c} \ +  \ \cdots \ + \ \binom{k-b-1}{k-c} \right] \nonumber\\
&=& \binom{N-b+1}{k-c+1} \ - \ \binom{k-b}{k-c+1}. \eea \end{proof}

\ \\

\begin{proof} \textbf{Identity II.}

\bea \sum_{m=k-a+1}^{N-a+1} m \frac{\binom{m-1}{k-a}\binom{N-m}{a-1}}{\binom{N}{k}} & \ = \ & \frac{1}{\binom{N}{k}} \sum_{m=k-a+1}^{N-a+1} m \cdot \binom{m-1}{k-a} \binom{N-m}{a-1} \nonumber \\ &=& \frac{1}{\binom{N}{k}} \sum_{m=k-a+1}^{N-a+1} \frac{m!}{(k-a)!(m-k+a-1)!}\cdot \frac{(N-m)!}{(a-1)!(N-m-a+1)!} \nonumber \\ &=& \frac{k-a+1}{\binom{N}{k}}\cdot \sum_{m=k-a+1}^{N-a+1} \binom{m}{k-a+1} \cdot \binom{N-m}{a-1} \nonumber \\ && {\rm We \ use \ Identity \ IV} \nonumber \\ &=& \frac{(N+1)(k-a+1)}{k+1} \eea

\end{proof}

\ \\
\begin{proof} \textbf{Identity III.} \bea \sum_{m=k-a+1}^{N-a+1} m^2 \frac{\binom{m-1}{k-a}\binom{N-m}{a-1}}{\binom{N}{k}} & \ = \ & \frac{1}{\binom{N}{k}}\sum_{m=k-a+1}^{N-a+1}(m+1)m\binom{m-1}{k-a}\binom{N-m}{a-1} \nonumber \\ && \ \ \ \ - \ \frac{1}{\binom{N}{k}}\sum_{m=k-a+1}^{N-a+1}m\binom{m-1}{k-a}\binom{N-m}{a-1} \nonumber \\ &=& \frac{1}{\binom{N}{k}}\sum_{m=k-a+1}^{N-a+1}\frac{(m+1)!}{(k-a)!(m-k+a-1)!}\binom{N-m}{a-1} \nonumber \\ && \ \ \ \ \ - \ \frac{(N+1)(k-a+1)}{k+1} \nonumber \\ &=& \frac{(k-a+1)(k-a+2)(N+2)(N+1)}{(k+2)(k+1)} \nonumber \\ && \ \ \ \ - \ \frac{(N+1)(k-a+1)}{k+1} \eea
\end{proof}

\ \\
\begin{proof} \textbf{Identity IV.}
\bea \binom{a+b+k+1}{a+b+1} \ & = & \ \sum_{i=0}^{k} \binom{a+i}{a} \binom{b+k-i}{b}. \eea We use proof by strong induction to prove the identity. Case where $k=0$: \bea \binom{a+b+1}{a+b+1} \ = \ \binom{a}{a}\binom{b}{b} \ = \ 1 \eea Case where k=1: \bea \binom{a+b+2}{a+b+1} \ = \ \binom{a}{a}\binom{b+1}{b} \ + \ \binom{a+1}{a}\binom{b}{b} \ = \ a+b+2. \eea We continue through all k until $k = k$ and we assume that the case where $k = k$ is true. We use this assumption to prove that the case works for $(k+1)$. We add all the equations when $k=k$ to when $k=1$. We write the sum of equations as the following.
\bea \binom{a}{a}\binom{b+k}{b} \ + \ \binom{a+1}{a}\binom{b+k-1}{b} \ + \ \cdots + \ \binom{a+k}{a}\binom{b}{b} \ = \ \binom{a+b+k+1}{a+b+1}. \eea
\bea \binom{a}{a}\binom{b+k-1}{b} \ + \ \binom{a+1}{a}\binom{b+k-2}{b} \ + \ \cdots + \ \binom{a+k-1}{a}\binom{b}{b} \ = \ \binom{a+b+k}{a+b+1}. \eea
\bea \binom{a}{a}\binom{b+k-2}{b} \ + \ \binom{a+1}{a}\binom{b+k-3}{b} \ + \ \cdots + \ \binom{a+k-2}{a}\binom{b}{b} \ = \ \binom{a+b+k-1}{a+b+1}. \eea
\bea \binom{a}{a}\binom{b+1}{b} \ + \ \binom{a+1}{a}\binom{b}{b} \ = \ \binom{a+b+2}{a+b+1}. \eea
\bea \binom{a}{a}\binom{b}{b} \ = \ \binom{a+b+1}{a+b+1}. \eea We add all the columns. \bea & \ \ & \binom{a}{a}\binom{b+k}{b} \ + \ \binom{a+1}{a}\binom{b+k-1}{b} \nonumber \\ && \ + \ \cdots \ + \ \binom{a+k}{a}\binom{b}{b} \nonumber \\ &=& \ \binom{a+b+k+2}{a+b+2}. \eea The resulting equation is the equation in the $(k+1)$ case. Since we assumed by strong induction that cases when $k=1$ to $k=k$ is all true, we are able to prove that the $(k+1)$ case is true. Therefore, the identity is proved.
\end{proof}

\section{Calculations}\label{sec:appendixDlargesttanks}

\subsection{Formula for $L\textsuperscript{th}$ largest tank} We state the calculation for the estimation formula using the $L\textsuperscript{th}$ largest tank.
\bea {\rm Prob}(M_{k-L+1}=m_{k-L+1}) \ = \ \frac{\binom{m_{k-L+1}-1}{k-L}\binom{N-m_{k-1}}{L-1}}{\binom{N}{k}}. \eea We calculate the expected value of $M_{k-L+1}$. \bea \mathbb{E}[M_{k-L+1}] & \ = \ & \sum_{m_{k-L+1}=k-L+1}^{N-L+1} m_{k-L+1} {\rm Prob}(M_{k-L+1} \ = \ m_{k-L+1}) \nonumber \\ &=& \sum_{m_{k-L+1}=k-L+1}^{N-L+1} m_{k-L+1} \frac{\binom{m_{k-L+1}-1}{k-L} \binom{N-m_{k-L+1}}{L-1}}{\binom{N}{k}} \nonumber \\ &=& \frac{1}{\binom{N}{k}}\sum_{m_{k-L+1}=k-1}^{N-L+1} \frac{k-L+1}{k-L+1} \frac{m_{k-L+1}!}{(k-L)!(m_{k-L+1}-k+L-1)!}\binom{N-m_{k-1}}{L-1} \nonumber \\ &=& \frac{k-L+1}{\binom{N}{k}}\sum_{m_{k-L+1}=k-L+1}^{N-L+1}  \frac{m_{k-L+1}!}{(k-L+1)!(m_{k-L+1}-k+L-1)!}\binom{N-m_{k-1}}{L-1} \nonumber \\ &=& \frac{k-L+1}{\binom{N}{k}}\sum_{m_{k-L+1}=k-L+1}^{N-L+1} \binom{m_{k-L+1}}{k-L+1}\binom{N-m_{k-L+1}}{L-1} \nonumber \\ && {\rm We \ use \ Identity \ IV} \nonumber \\ &=& \frac{k-L+1}{\binom{N}{k}} \binom{N+1}{k+1} \nonumber \\ &=& (N+1)\frac{k-L+1}{k+1}. \eea Therefore, the formula for $N$ in terms of $m$ and $k$ is \bea N \ = \ m_{k-L+1} \frac{k+1}{k-L+1} \ - \ 1. \eea

\begin{remark}
We see that when $L =1$, the case where we observe the largest tank, we get the original German Tank problem formula. Also, notice that as we observe smaller and smaller tanks, the scaling factor increases.
\end{remark}

\subsection{Variance calculation for $m_{k-1}$ in discrete one dimensional case} We start by calculating the variance of $m_{k-1}$
\bea {\rm Var}(M_{k-1}) & \ = \ & \mathbb{E}[M_{k-1}^2] \ - \ \mathbb{E}[M_{k-1}]^2 \nonumber \\ &=& \sum_{m_{k-1}=k-1}^{N-1}m_{k-1}^2 {\rm Prob(M_{k-1} = m_{k-1})} \ - \ \bigg[\sum_{m_{k-1}=k-1}^{N-1}m_{k-1} {\rm Prob(M_{k-1} = m_{k-1})}\bigg]^2 \nonumber \\ && {\rm We \ know \ the \ second \ term \ so \ we \ just \ calculate \ the \ first \ term.} \nonumber \\ &=& \sum_{m_{k-1}=k-1}^{N-1}m_{k-1}^2 \frac{\binom{m_{k-1}-1}{k-2}(N-m_{k-1})}{\binom{N}{k}}
\ - \ \frac{(N+1)^2(k-1)^2}{{k+1}^2}. \eea
We take out the first term and calculate it separately. We expand the binomial coefficients, and regroup terms, so that we can use our identities. \bea & \  \ & \sum_{m_{k-1}=k-1}^{N-1}m_{k-1}^2 \frac{\binom{m_{k-1}-1}{k-2}(N-m_{k-1})}{\binom{N}{k}} \nonumber \\ &=& \sum_{m_{k-1}=k-1}^{N-1}(m_{k-1})(m_{k-1}+1) \frac{\binom{m_{k-1}-1}{k-2} \binom{N-m_{k-1}}{1}}{\binom{N}{k}} \nonumber \\ && \ - \sum_{m_{k-1}=k-1}^{N-1}(m_{k-1}) \frac{\binom{m_{k-1}-1}{k-2} \binom{N-m_{k-1}}{1}}{\binom{N}{k}} \nonumber \\ &=& \frac{1}{\binom{N}{k}} \sum_{m_{k-1}=k-1}^{N-1} \frac{k(k-1)}{k(k-1)} \frac{(m_{k-1}+1)!}{(k-2)!(m_{k-1}-k+1)!} \binom{N-m_{k-1}}{1} \  \nonumber \\ && \ \ \ - \ \frac{1}{\binom{N}{k}} \sum_{m_{k-1}=k-1}^{N-1} \frac{k-1}{k-1} \frac{m_{k-1}!}{(k-2)!(m_{k-1}-k+1)!} \binom{N-m_{k-1}}{1} \nonumber \\ &=& \frac{k(k-1)}{\binom{N}{k}} \sum_{m_{k-1}=k-1}^{N-1} \frac{(m_{k-1}+1)!}{k! (m_{k-1}-k+1)!} \binom{N-m_{k-1}}{1} \  \nonumber \\ && \ \ \ - \ \frac{k-1}{\binom{N}{k}} \sum_{m_{k-1}=k-1}^{N-1} \frac{m_{k-1}!}{(k-1)!(m_{k-1}-k+1)!} \binom{N-m_{k-1}}{1} \nonumber \\ &=& \frac{k(k-1)}{\binom{N}{k}} \sum_{m_{k-1}=k-1}^{N-1} \binom{m_{k-1}+1}{k} \binom{N-m_{k-1}}{1} \  \nonumber \\ && \ \ \ - \ \frac{k-1}{\binom{N}{k}} \sum_{m_{k-1}=k-1}^{N-1} \binom{m_{k-1}}{k-1} \binom{N-m_{k-1}}{1} \nonumber \\ && {\rm We \ use \ Identity \ IV \ to \ simplify} \nonumber \\ &=& \frac{k(k-1)}{\binom{N}{k}} \cdot \binom{N+2}{k+2} \ - \ \frac{k-1}{\binom{N}{k}}\binom{N+1}{k+1}\eea We calculate all terms now. \bea {\rm Var}(M_{k-1}) & \ = \ & \frac{(k)(k-1)(N+1)(N+2)}{(k+1)(k+2)} \ - \ \frac{(N+1)(k-1)}{k+1} \ - \ \frac{(N+1)^2(k-1)^2}{(k+1)^2} \nonumber \\ &=& \frac{2(k-1)(N-k)(N+1)}{(k+1)^2(k+2)}. \eea
We scale the variance of $X_{k-1} \ = \ m_{k-1} (k+1)/(k-1):$
\bea {\rm Var} (X_{k-1}) & \ = \ & {\rm Var} (M_{k-1}) \cdot \frac{(k+1)^2}{(k-1)^2} \nonumber \\ &=& \frac{2(k-1)(N-k)(N+1)}{(k+1)^2(k+2)} \cdot \frac{(k+1)^2}{(k-1)^2} \nonumber \\ &=& \frac{2(N-k)(N+1)}{(k+2)(k-1)}. \eea

\subsection{Continuous Weighted Problem}: In this subsection, we include the calculation of the weighted problem in the continuous case. Let $X$ be the value between 0 and $N$. We use the same motivation from portfolio theory and see if we can apply the idea here of seeing if we can decrease variance by optimizing the two weights.

\subsubsection{Formulas using $m_k$ and $m_{k-1}$} Because we are in the continuous case, we use the CDF method where we take the derivative of the CDF. We use integrals instead of sums. \bea {\rm CDF}_M(X) \ = \ {\rm Prob} (M_{k-1} \le  X) \ = \ (\frac{X}{N})^k \eea \bea f_X(x) \ = \ {\rm PDF}_M(X) \ =\ {\rm CDF}_M(X)' \ = \ \frac{k X^{k-1}}{N^k} \eea \bea \mathbb{E}[X] \ = \ \int_{0}^{N} X f(X) \,dX  \ = \ \int_{0}^{N} \frac{k X^k}{N^k} \,dX \ = \ \frac{k N^{k+1}}{(k+1) N^k}. \eea \bea     m_k \ = \ \frac{k}{k+1} N \eea \bea N \ = \ m_k(\frac{k+1}{k}). \eea Through similar calculations, we can get the formula of estimation using $m_{k-1}$. \bea N \ = \ m_{k-1} (\frac{k+1}{k-1}). \eea

\subsubsection{Variance of $m_k$ and $m_{k-1}$}: \bea {\rm Var}(m_k) & \ = \ &  \mathbb{E}[X_k^2] \ - \ \mathbb{E}[X_k]^2 \nonumber \\ &=& \int_{0}^{N} X^2 f(X) \,dX  \ - \ \Bigg[\int_{0}^{N} X f(X) \,dX \Bigg]^2 \nonumber \\ &=& \int_{0}^{N} \frac{k X^{k+1}}{N^k} \,dX \ - \ (\frac{k N}{k+1})^2 \nonumber \\ &=& \frac{k N^2}{(k+1)^2(k+2)}. \eea We scale the variance of $m_k$ to get the variance of $X_k$ by multiplying $(k+1)^2/k^2$. \bea {\rm Var}(X_k) \ = \ \frac{N^2}{(k+2)k}. \eea After going through very similar calculations, we get the variance of $X_{k-1}$. We won't write the full calculation. \bea {\rm Var}(X_{k-1}) \ = \ \frac{2 N^2}{(k+2)(k-1)}. \eea

\subsubsection{Covariance term} We calculate the covariance term using Lemma \ref{2.4covariancelemma}. Again, we use integrals instead of sums, as we are in the continuous setting.  \bea {\rm Cov}(X_k,X_{k-1}) & \ = \ & \mathbb{E}[X_k \cdot X_{k-1}] \ - \ \mathbb{E}[X_k]\cdot \mathbb{E}[X_{k-1}] \nonumber \\ &=& \frac{(k+1)^2}{k(k-1)}\mathbb{E}[m_k \cdot m_{k-1}] \ - \ \frac{(k+1)^2}{k(k-1)} \mathbb{E}[m_k]\cdot \mathbb{E}[m_{k-1}] \nonumber. \eea We calculate term by term. \bea \mathbb{E}[m_k \cdot m_{k-1}] \ = \ \int_{0}^{N}\int_{0}^{m_k} m_km_{k-1} ({\rm Prob \ M_k \ = \ m_k , \ M_{k-1} = m_{k-1}}) \,dm_{k-1}\,dm_k. \eea To calculate this term, we have to use the joint probability density function. After choosing $m_k$ and $m_{k-1}$, the rest of $(k-2)$ tanks have to be smaller than $m_{k-1}$, the probability is ${m_{k-1}^{k-2}}/{N^k}$. Also, note that we multiply the probability by $\binom{k}{2}$ and also by 2 because the tanks aren't ordered. But, we know that $m_k$ must be larger than $m_{k-1}$. Therefore, we multiply by 2. \bea \mathbb{E}[m_k \cdot m_{k-1}] & \ = \ & 2 \binom{k}{2}\int_{0}^{N}\int_{0}^{m_k} m_km_{k-1} \frac{m_{k-1}^{k-2}}{N^k} \,dm_{k-1}\,dm_k \nonumber \\ && \ {\rm After \ calculating \ this \ integral, \ we \ get:} \nonumber \\ &=& \frac{N^2 (k)(k-1)}{k(k+2)}. \eea We scale $\mathbb{E}[m_k \cdot m_{k-1}]$ to get $\mathbb{E}[X_k \cdot X_{k-1}]$ \bea \mathbb{E}[X_k \cdot X_{k-1}] \ = \ \frac{N^2 (k+1)^2}{k(k+2)}. \eea We know that \bea \mathbb{E}[X_k] \cdot \mathbb{E}[X_{k-1}] \ = \ N^2. \eea Therefore, we calculate the full covariance term. \bea \mathbb{E}[X_k \cdot X_{k-1}] & \ = \ & \Bigg[\frac{N^2 (k+1)^2}{k(k+2)}-1\Bigg] \nonumber \\ &=& \frac{N^2}{k(k+2)}. \eea  Now that we have the variances of $X_k$ and $X_{k-1}$ and covariance of $X_k$ and $X_{k-1}$, we plug them into the formula we used for the discrete weighted sum case.

\subsubsection{Finding optimal alpha}: Recall the weighted statistic. We state the weighted statistic. Let $\alpha \in [0,1]$ and define the weighted statistic $X_\alpha$ by \bea
X_\alpha \ := \ \alpha X_k \ + \ (1-\alpha) X_{k-1}. \eea We take the derivative on both sides and find the alpha value that minimizes the total variance of the weighted statistic. We compute the variance of $X_\alpha$: \bea {\rm Var \ X}_\alpha \ = \ \alpha^2 {\rm Var \ X_k} \ + \ (1-\alpha)^2 {\rm Var \ X_{k-1}} \ + \ 2 \ {\rm Cov}(X_k,X_{k-1}). \eea Let $\alpha_{k,k-1}$ denote the optimal alpha value. Taking the derivative with respect to $\alpha_{k,k-1}$ yields \bea {\rm Var}(X)' & \ = \ & 2\alpha_{k,{k-1}}({\rm Var}(X_k)+{\rm Var}(X_{k-1})-2 \  {\rm Cov}(X_k,X_{k-1}) \nonumber \\ && \ \ \ \ \ \ + \ 2 \  ({\rm Cov}(X_k,X_{k-1}) \ - \ {\rm Var}(X_{k-1})). \eea After doing some basic algebra, we see that the optimal value of alpha is equal to 1. \bea \alpha_{k,k-1} \ = \ \frac{{\rm Var}(X_{k-1}) \ - \ {\rm Cov}(X_k,X_{k-1})}{{\rm Var}(X_{k})+{\rm Var}(X_{k-1})-2 \  {\rm Cov}(X_k,X_{k-1})} \ = \ 1. \eea We plug in the formulas that we've calculated. Then, we see that the optimal alpha value is also 1 for the continuous case.
\pagebreak

\section{Mathematica Code}\label{appendixEmathematicacode}
\textbf{One Dimensional German Tank Problem code}
\begin{figure}[h]
    \centering
    \includegraphics[width=15cm, frame]{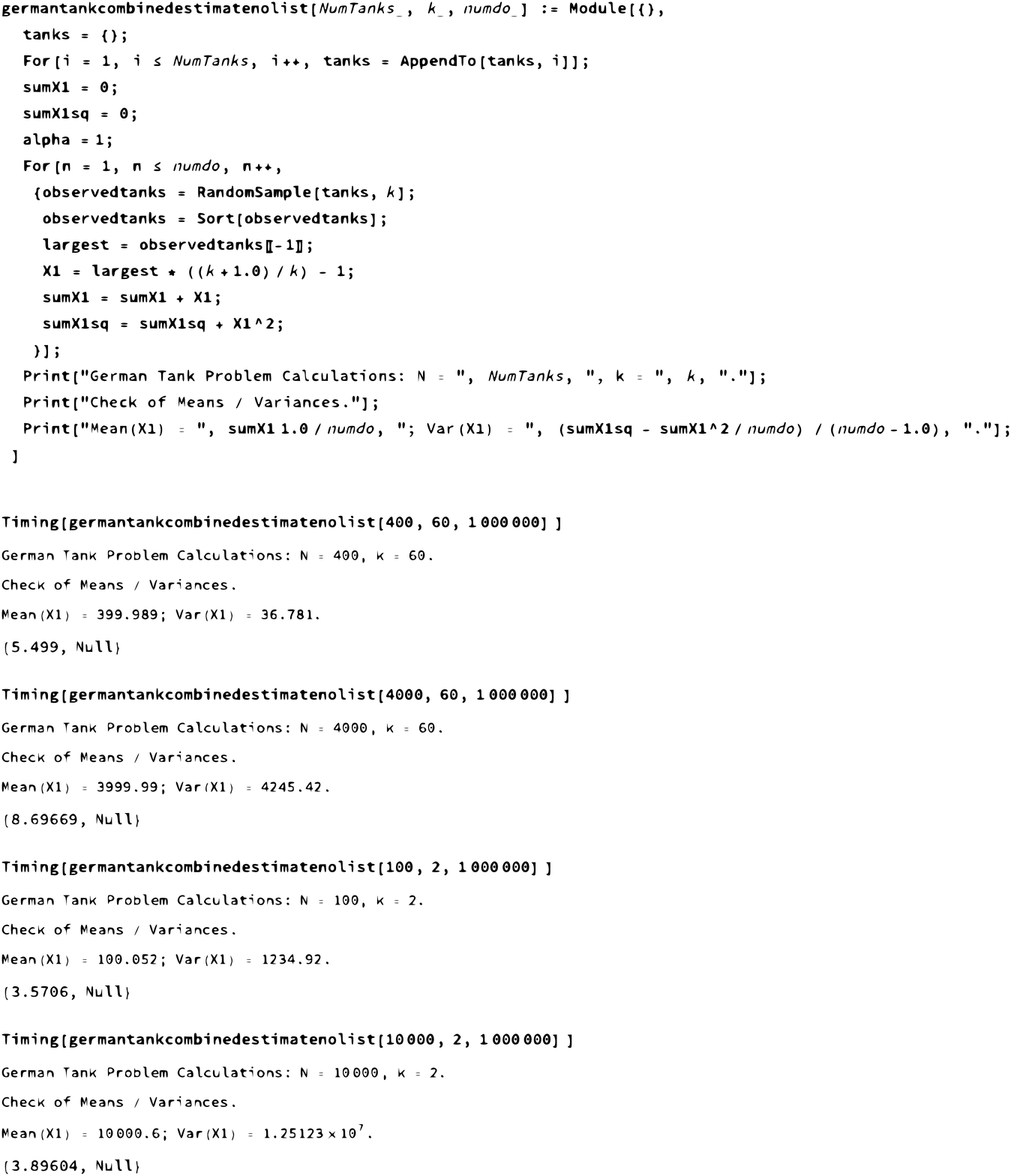}
    \caption{Simulation for one dimensional discrete case.}
    \label{fig:One dimensional Problem code}
\end{figure}
\pagebreak

\textbf{Two Dimensional Circle Code}
\begin{figure}[h]
    \centering
    \includegraphics[width=17cm, frame]{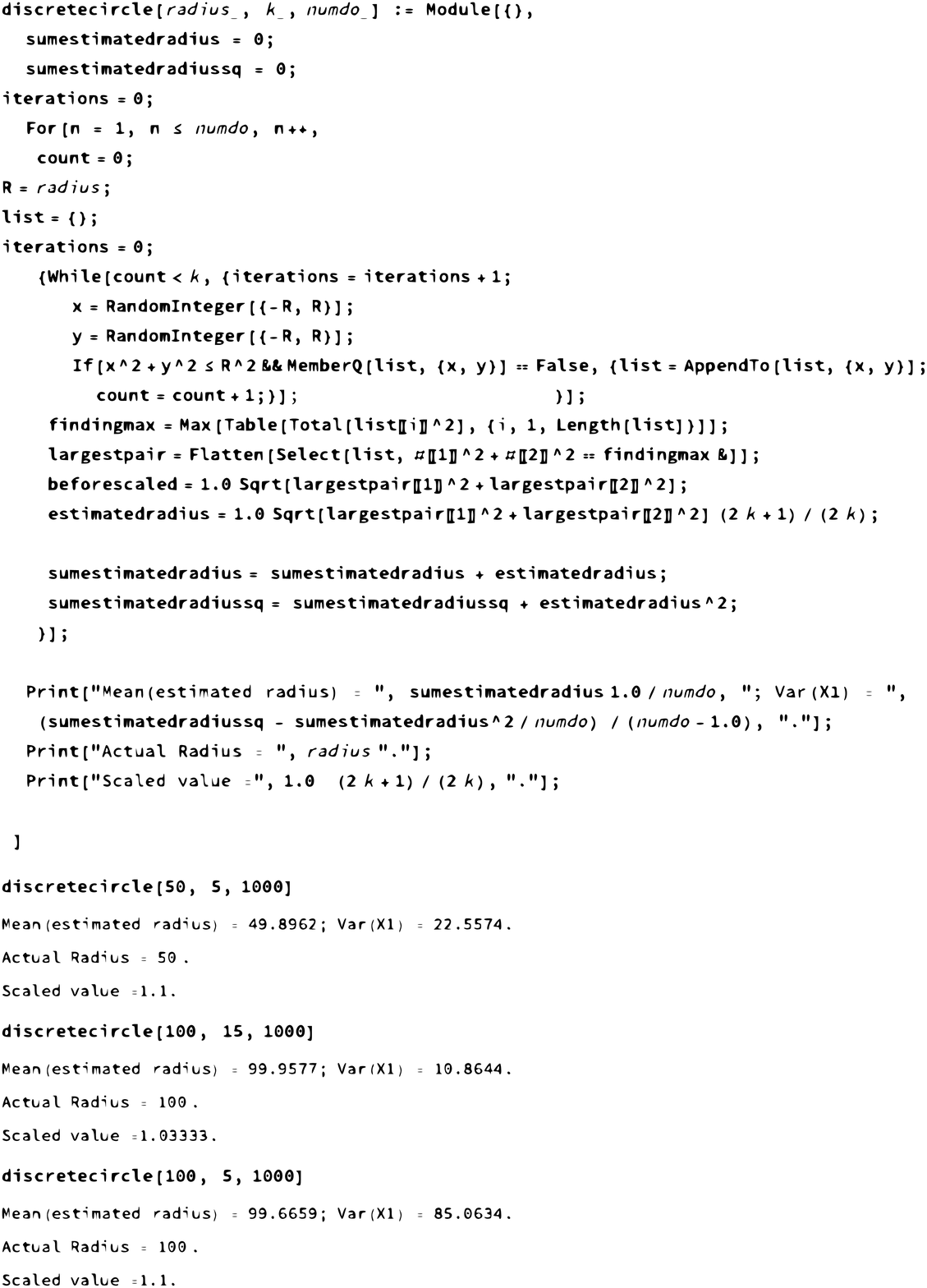}
    \caption{Simulation for two dimensional discrete circle.}
    \label{fig:Two dimensional circle code}
\end{figure}

\textbf{Two Dimensional Discrete Square Simulation}
\begin{figure}[h]
    \centering
    \includegraphics[width=14cm, frame]{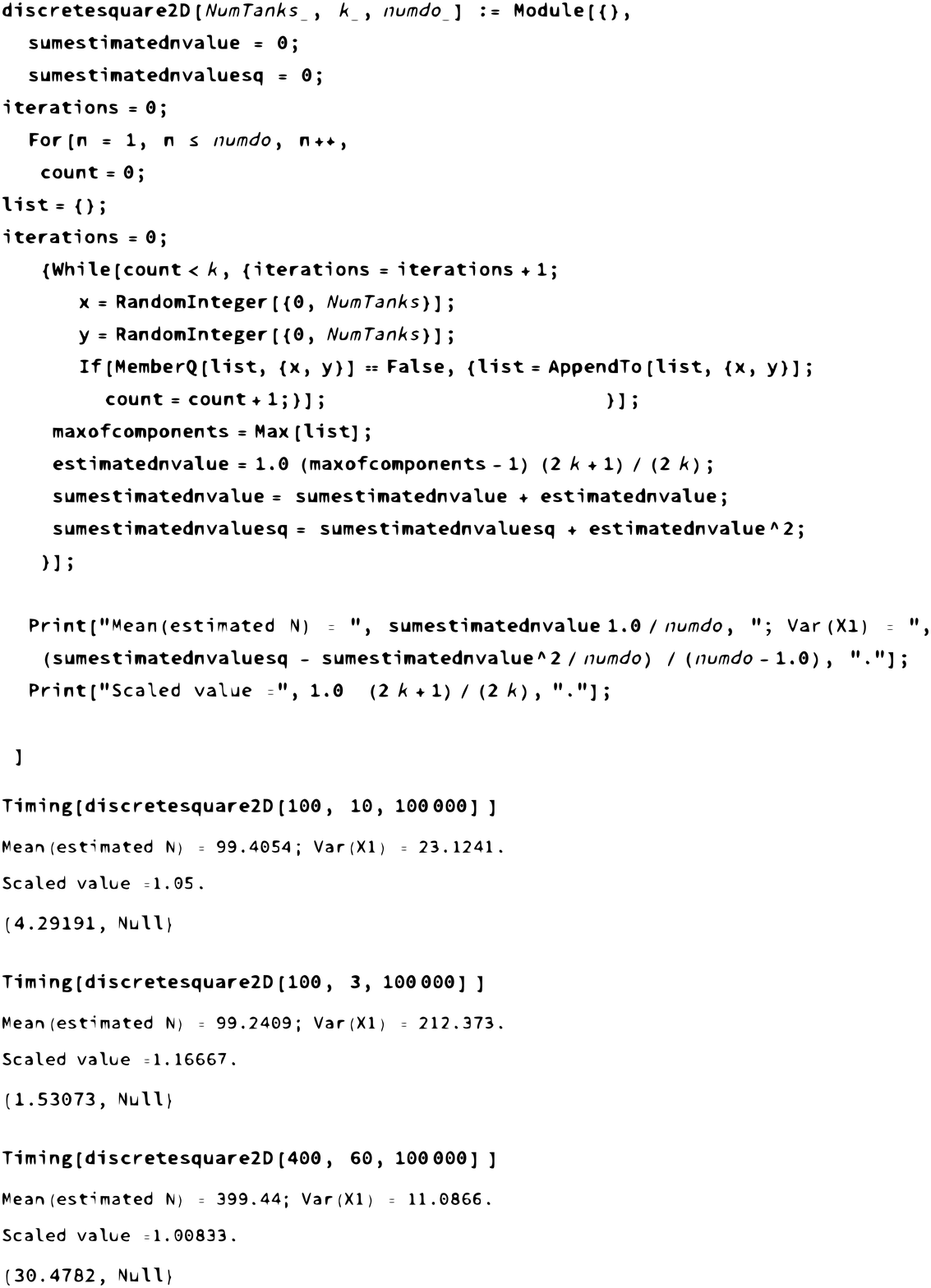}
    \caption{Simulation for Two Dimensional Discrete Square.}
    \label{fig:Two Dimensional Discrete Square}
\end{figure}

\bigskip
\bigskip
\bigskip
\bigskip
\bigskip

\textbf{Recursive Method}
\begin{figure}[h]
    \centering
    \includegraphics[width=11.5cm, frame]{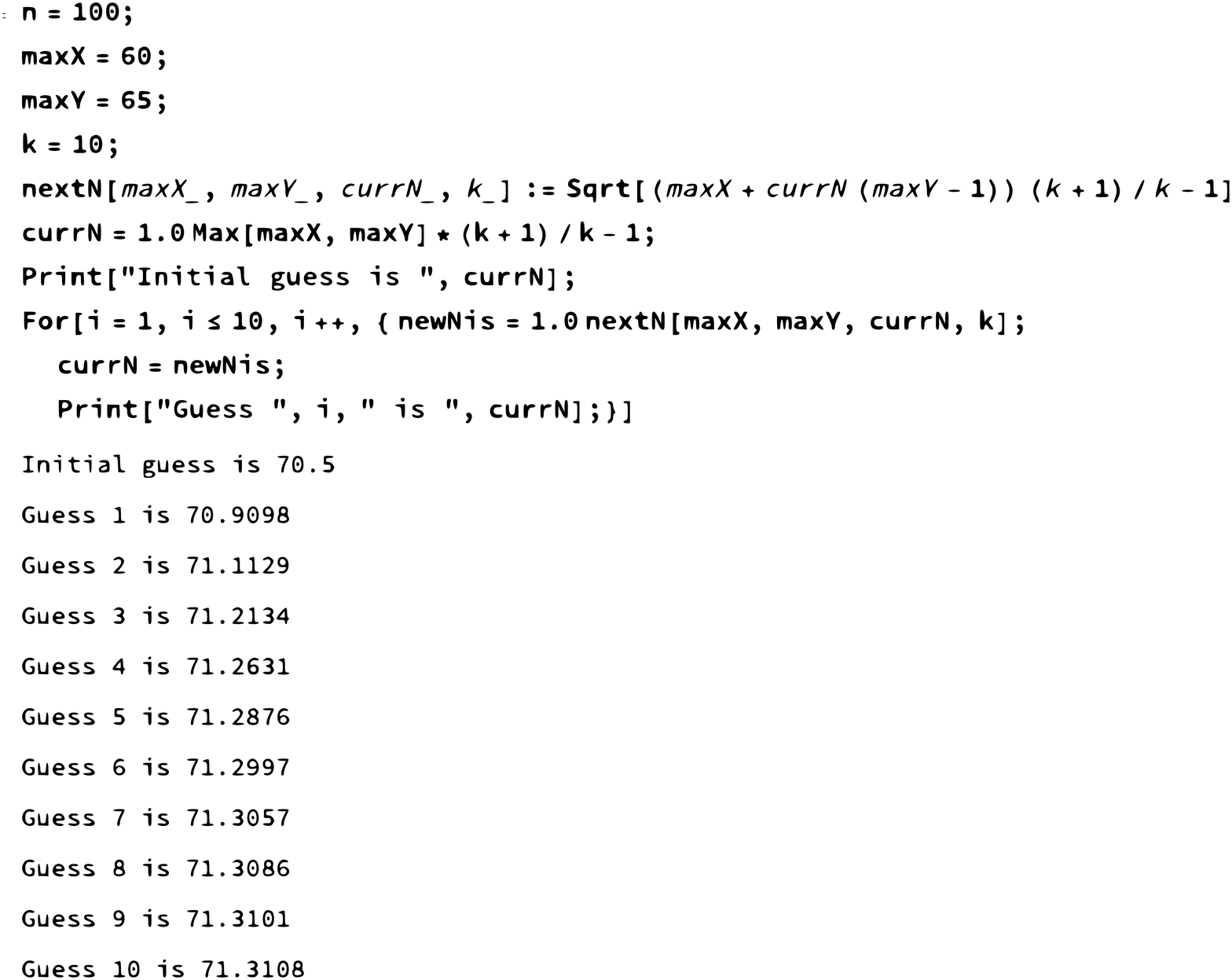}
    \caption{Simulation for recursion method for square.}
    \label{fig:Recursive method}
\end{figure}
\begin{figure}[h]
    \centering
    \includegraphics[width=11.5cm, frame]{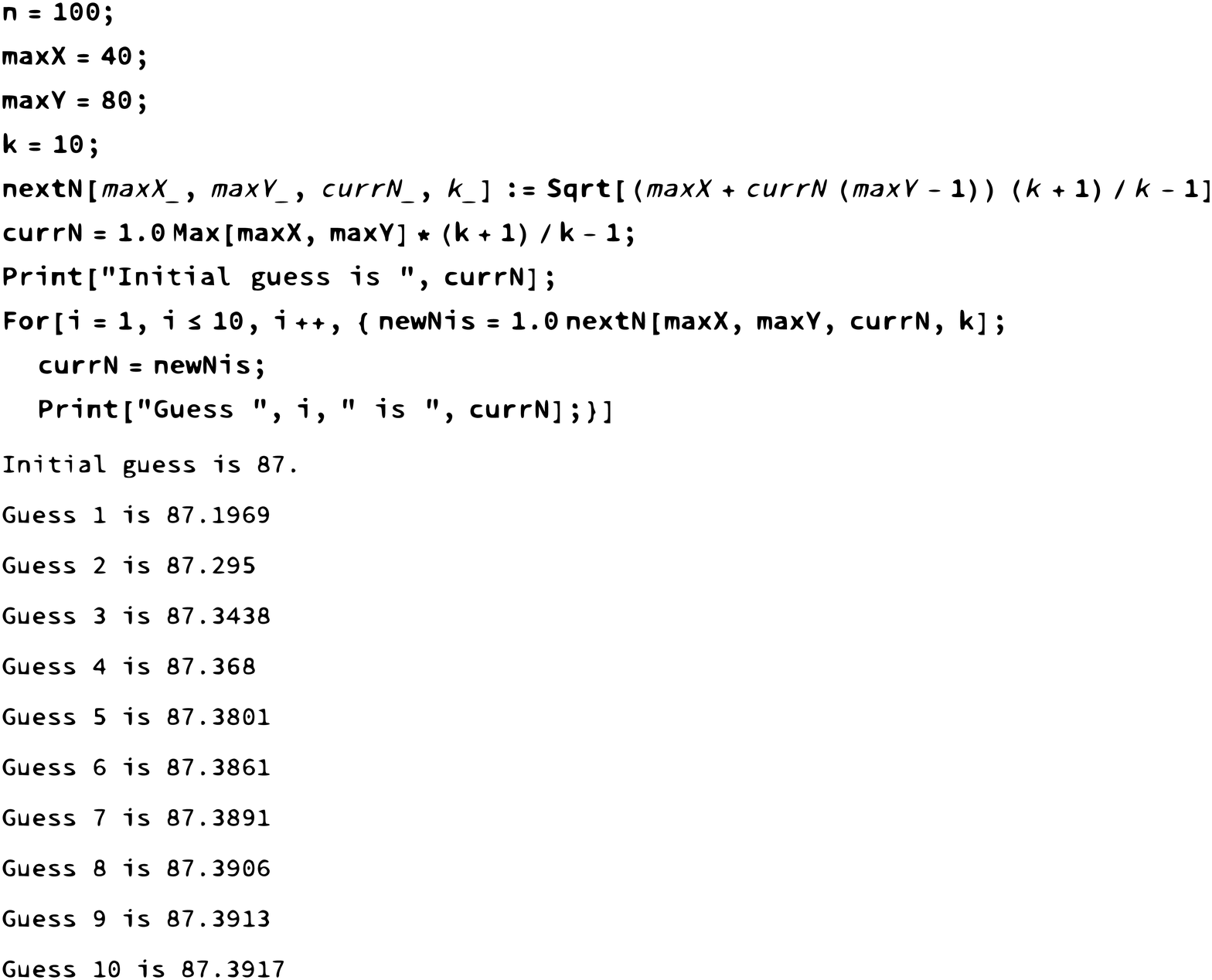}
    \caption{Simulation for two dimensional discrete circle.}
    \label{fig:Two dimensional circle code}
\end{figure}

\ \\

\textbf{Code for Comparing Formulas for one and two dimensions}
\begin{figure}[h]
    \centering
    \includegraphics[width=16.5cm, frame]{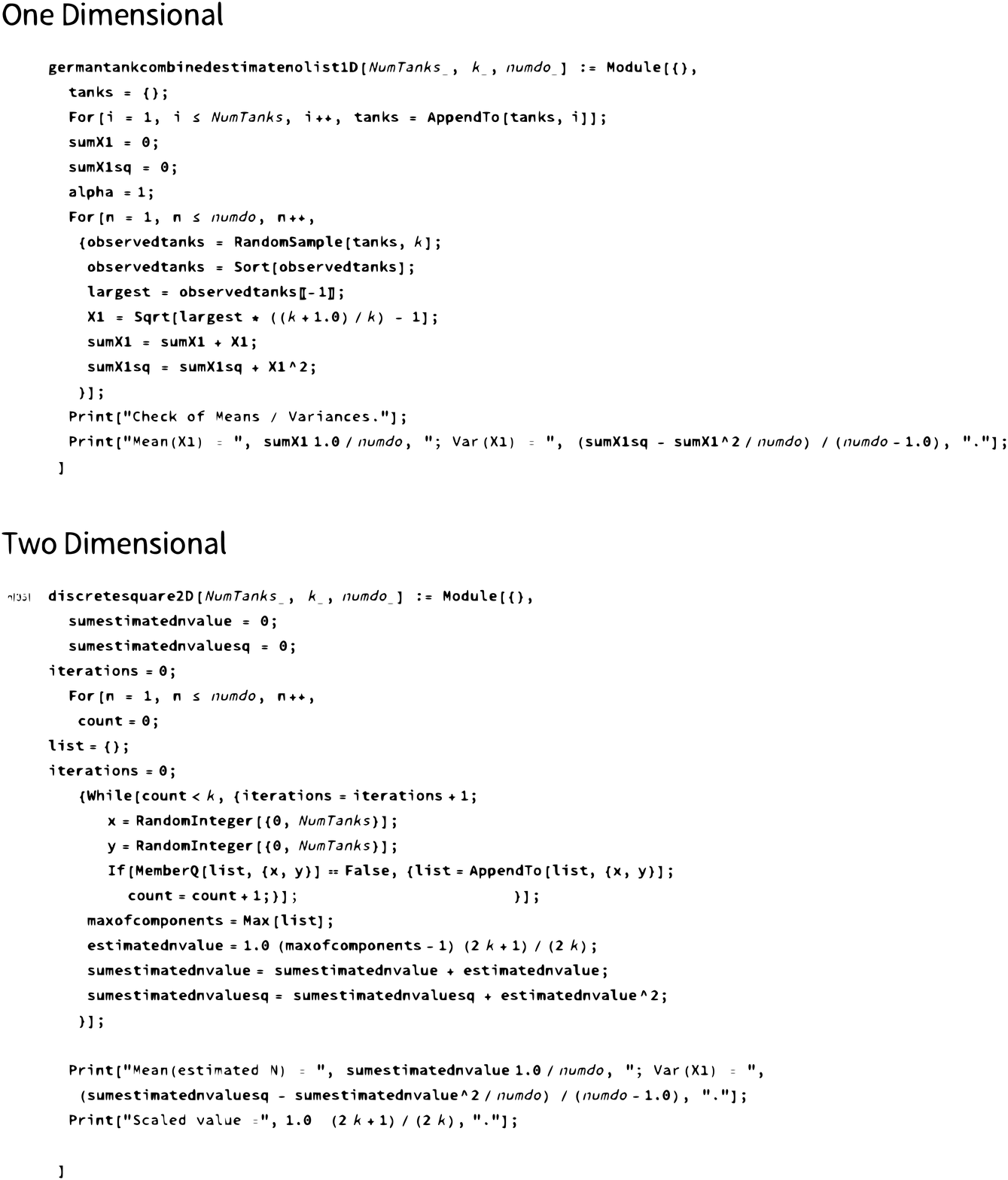}
    \caption{Code for Comparing Formulas for one and two dimensions.}
    \label{fig:Code for Comparing Formulas for one and two dimensions}
\end{figure}

\ \\

\textbf{Results of comparison}
\begin{figure}[h]
    \centering
    \includegraphics[width=16cm, frame]{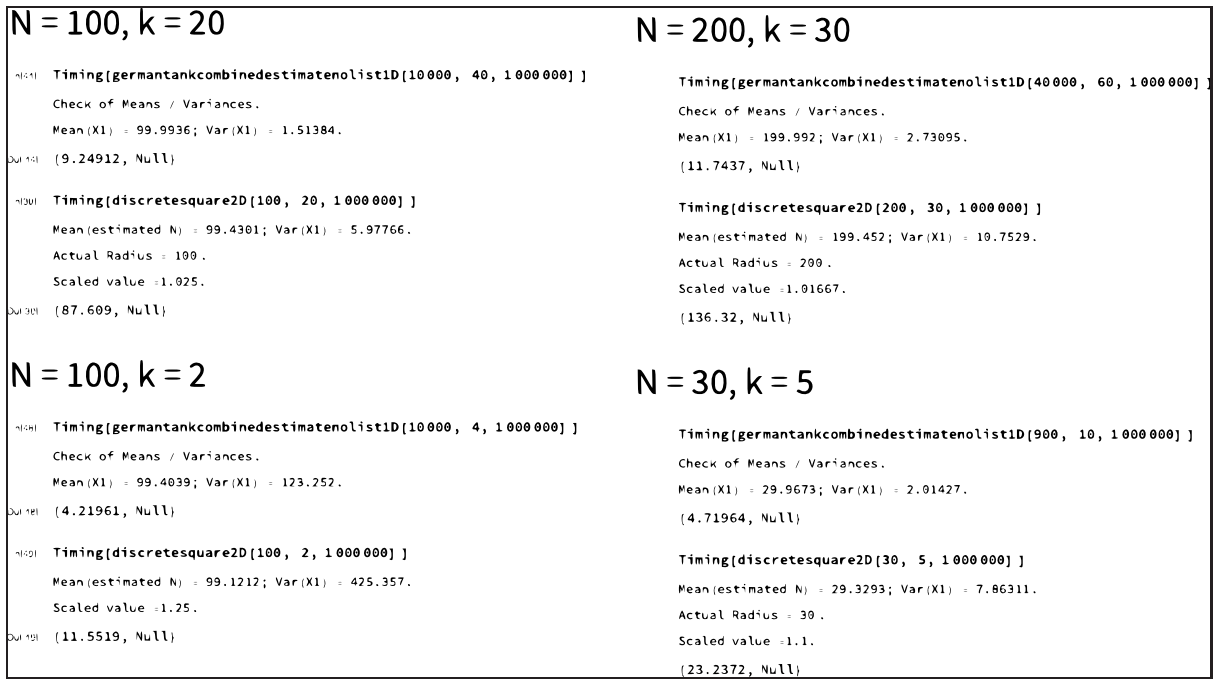}
    \caption{Results of comparison.}
    \label{fig:Results of comparison}
\end{figure}

\pagebreak

\bigskip
\bigskip

\end{document}